\def\RR{\boldsymbol{R}}
\def\BB {\boldsymbol{B}}
\def\CC {\boldsymbol{C}}
\def\FF{\boldsymbol{F}}
\def\qq{\boldsymbol{q}}
\def\rr{\boldsymbol{r}}
\def\BB {\boldsymbol{B}}
\def\LL {\boldsymbol{L}}
\def\AAA {\boldsymbol{A}}
\def\C{\mathbb{C}}
\def\xx{\boldsymbol{x}}
\def\dd{\boldsymbol{d}}
\def\ff{\boldsymbol{f}}
\def\hh{\boldsymbol{h}}
\def\ll{\boldsymbol{l}}
\def\pp{\boldsymbol{p}}
\def\yy{\boldsymbol{y}}
\def\bb{\boldsymbol{b}}
\def\uu{\boldsymbol{u}}
\def\vv{\boldsymbol{v}}
\def\cc{\boldsymbol{c}}
\def\sss{\boldsymbol{s}}
\def\UU{\boldsymbol{U}}
\def\VV{\boldsymbol{V}}
\def\GG{\boldsymbol{G}}
\def\HH{\boldsymbol{H}}
\def\pp{\boldsymbol{p}}
\def\QQ{\boldsymbol{Q}}
\def\RR{\boldsymbol{R}}
\def\TT{\boldsymbol{T}}
\def\YY{\boldsymbol{Y}}
\def\MM{\boldsymbol{M}}
\def\aaa{\boldsymbol{a}}
\def\gg{\boldsymbol{g}}
\def\hh{\boldsymbol{h}}
\def\II{\boldsymbol{I}}
\def\00{\boldsymbol{0}}
\def\11{\boldsymbol{1}}
\def\ggamma{\mbox{\boldmath{$\gamma$}}}
\def\eepsilon{\mbox{\boldmath{$\epsilon$}}}
\def\xxi{\mbox{\boldmath{$\xi$}}}
\def\ppsi{\mbox{\boldmath{$\psi$}}}
\def\eeta{\mbox{\boldmath{$\eta$}}}
\def\SSigma{\mbox{\boldmath{$\Sigma$}}}
\newcommand{\off}[1]{}
\newtheorem{theorem}{Theorem}
\newtheorem{lemma}[theorem]{Lemma}
\newtheorem{example}[theorem]{Example}
\newenvironment{proof}{\noindent{\bf{Proof.\/}}}{\hfill$\blacksquare$\vskip0.1in}
\newcommand{\beq}{\begin{equation}}
\newcommand{\eeq}{\end{equation}}
\newcommand{\ben}{\begin{enumerate}}
\newcommand{\een}{\end{enumerate}}
\newcommand{\rank}{\mbox{rank}}
\begin{document}
\title
{\bf  SVD-MPE: An SVD-Based Vector Extrapolation Method of Polynomial Type}
\author
{Avram Sidi\\
Computer Science Department\\
Technion - Israel Institute of Technology\\ Haifa 32000, Israel\\{~~}\\
e-mail:\ \ \url{asidi@cs.technion.ac.il}\\
URL:\ \ \url{http://www.cs.technion.ac.il/~asidi}}
\date{\today}
\maketitle

\thispagestyle{empty}
\newpage

\begin{abstract}\noindent
An important problem that arises in different areas of science and engineering is that of computing the limits of sequences of
vectors $\{\xx_m\}$, where $\xx_m\in \C^N$, $N$ being  very large. Such sequences arise, for example, in the solution of systems of linear or nonlinear
equations by fixed-point iterative methods, and $\lim_{m\to\infty}\xx_m$ are simply the required solutions. In most cases of interest, however,
these sequences converge to their limits extremely slowly. One practical way to make the sequences $\{\xx_m\}$ converge more quickly
is to apply to them vector extrapolation methods. Two types of methods exist in the literature: polynomial type methods and epsilon algorithms.
In most applications, the polynomial type methods
have proved to be superior  convergence accelerators.
Three polynomial type methods are known, and these are the {minimal polynomial extrapolation} (MPE),  the {reduced rank extrapolation} (RRE), and the
{modified minimal polynomial extrapolation} (MMPE). In this work, we develop yet another
polynomial type method, which is based on the singular value decomposition, as well as the ideas that lead to MPE. We denote this new  method by  SVD-MPE.  We also design a numerically stable  algorithm for its implementation, whose computational cost and storage requirements are minimal. Finally, we illustrate the use of {SVD-MPE} with
 numerical  examples.
\end{abstract}
\vspace{1cm} \noindent {\bf Mathematics Subject Classification
2000:} 15A18, 65B05, 65F10, 65F50, 65H10.

\vspace{1cm} \noindent {\bf Keywords and expressions:}
Vector extrapolation, minimal polynomial extrapolation, singular value decomposition, Krylov subspace methods.
\thispagestyle{empty}
\newpage
\pagenumbering{arabic}

\section{Introduction and background}\label{se1}
\setcounter{theorem}{0} \setcounter{equation}{0}
An important problem that arises in different areas of science and
engineering is that of computing limits of sequences of vectors
$\{{\xx}_m\}$,\footnote{Unless otherwise stated, $\{c_m\}$ will
mean $\{c_m\}_{m=0}^\infty$ throughout this work.} where
${\xx}_m\in\mathbb{C}^N$, the dimension $N$ being very large in
many applications. Such vector sequences arise, for example, in
the numerical solution of very large systems of linear or
nonlinear equations by fixed-point iterative methods, and
$\lim_{m\to\infty}{\xx}_m$ are simply the required solutions to
these systems. One common source of such systems is the
finite-difference or finite-element discretization of continuum
problems. In later chapters, we will discuss further problems that
give rise to vector sequences whose limits are needed.

In most cases of interest, however, the sequences $\{\xx_m\}$
converge to their limits extremely slowly. That is,  to
approximate $\sss=\lim_{m\to\infty}{\xx}_m$,  with a reasonable
prescribed level of accuracy, by $\xx_m$, we need to  consider
very large values of $m$. Since, the vectors $\xx_m$ are normally
computed in the order $m=0,1,2,\ldots,$ it is clear that we have to
compute many such vectors until we reach one that has acceptable
accuracy. Thus,  this way of approximating $\sss$ via the $\xx_m$
becomes very expensive computationally.

Nevertheless, we may ask whether we can do something with  those
$\xx_m$ that are already available, to somehow obtain new
approximations to $\sss$ that are better than each individual
available $\xx_m$. The answer to this question is in the
affirmative for at least a large class of sequences that arise
from fixed-point iteration of linear and nonlinear systems of
equations.  One practical way of achieving this is by applying to
the sequence $\{\xx_m\}$ a suitable {\em convergence acceleration
method} (or {\em extrapolation method}).

Of course, in case $\lim_{m\to\infty}{\xx}_m$ does not exist, it
seems that no use could be made of the $\xx_m$. Now, if the
sequence $\{\xx_m\}$ is generated by an iterative solution of a
linear or nonlinear system of equations, it can be thought of as
``diverging from'' the solution $\sss$ of this system. We call
$\sss$ the {\em antilimit} of $\{\xx_m\}$ in  such a case.  It
turns out that vector extrapolation methods can be applied to such
divergent sequences $\{\xx_m\}$ to obtain good approximations to
the relevant antilimits, at least in some  cases.

Two different types of vector extrapolation methods exist in the literature:
\begin{enumerate}
\item Polynomial type methods: The {\em minimal polynomial extrapolation} (MPE) of Cabay and Jackson \cite{Cabay:1976:PEM},
    the {\em reduced rank extrapolation} (RRE) of Eddy \cite{Eddy:1979:ELV} and Me{\u{s}}ina \cite{Mesina:1977:CAI}, and the {\em modified   minimal polynomial extrapolation} (MMPE) of Brezinski \cite{Brezinski:1975:GTS}, Pugachev \cite{Pugachev:1978:ACI}
    and Sidi, Ford, and Smith \cite{Sidi:1986:ACV}.
\item Epsilon algorithms: The {\em scalar epsilon algorithm} (SEA) of Wynn \cite{Wynn:1956:DCT} (which is actually a recursive procedure for implementing the transformation of Shanks \cite{Shanks:1955:NTD}), the  {\em vector epsilon algorithm} (VEA) of Wynn \cite{Wynn:1962:ATI}, and the {\em topological epsilon algorithm} (TEA) of Brezinski \cite{Brezinski:1975:GTS}.
\end{enumerate}

The paper by Smith, Ford, and Sidi \cite{Smith:1987:EMV} gives a review of all these methods (except MMPE) that covers the developments in vector extrapolation methods until the end of the 1970s. For  up-to-date reviews of MPE and RRE, see Sidi \cite{Sidi:2008:VEM} and \cite{Sidi:2012:RTV}.
Numerically stable  algorithms for implementing MPE and RRE are given in Sidi \cite{Sidi:1991:EIM},
these algorithms being also economical both computationally and storagewise.
For the convergence properties and error analyses of  MPE, RRE, MMPE, and TEA, as these are applied to
vector sequences generated by fixed-point iterative methods from linear systems,
see the works by Sidi \cite{Sidi:1986:CSP}, \cite{Sidi:1988:EVP}, \cite{Sidi:1994:CIR},
Sidi, Ford, and Smith \cite{Sidi:1986:ACV}, Sidi and Bridger \cite{Sidi:1988:CSA},
and Sidi and Shapira \cite{Sidi:1998:UBC}.  VEA has  been studied by Brezinski \cite{Brezinski:1970:AEA}, \cite{Brezinski:1971:ARS}, Gekeler \cite{Gekeler:1972:SSE},
Wynn \cite{Wynn:1963:CFW}, \cite{Wynn:1964:GPV}, and Graves--Morris \cite{Graves:1983:VVR-1},
\cite{Graves:1992:EMV}.

 Vector extrapolation methods are used effectively
in various branches of science and engineering
in accelerating the convergence of iterative methods that result from large sparse systems of equations.

All of these methods have the useful feature that their only input is the vector sequence $\{\xx_m\}$ whose convergence is to be accelerated, nothing else being needed.   In most applications, however, the polynomial type methods, especially MPE and RRE,
have proved to be superior  convergence accelerators; they require much less computation than, and half as much storage as, the epsilon algorithms for the same accuracy.

In this work, we develop yet another
polynomial type method, which is based on the singular value decomposition (SVD), as well as some ideas that lead to MPE. We denote this new  method by {SVD-MPE}. We also design
a numerically stable  algorithm for its implementation, whose computational cost and storage requirements are minimal.
The new method is described in the next section. In Section \ref{se3}, we show how the error in the approximation produced by SVD-MPE can be estimated at zero cost in terms of the quantities already used in the construction of the approximation. In Section \ref{se4}, we give  a very efficient  algorithm for implementing SVD-MPE. In Section \ref{se5}, we derive determinant representations for the approximations produced by SVD-MPE, while in Section \ref{se6}, we show that this method is a Krylov subspace method when applied to vector sequences that result from the  solution of linear systems via fixed-point iterative schemes.
Finally, in  Section \ref{se7}, we illustrate its use with two numerical examples.

Before closing, we state the ({\em reduced} version of) the well known {\em singular value decomposition} (SVD) theorem. For different  proofs, we refer the reader to  Golub and Van Loan \cite{Golub:2013:MC},
Horn and Johnson \cite{Horn:1985:MA}, Stoer and Bulirsch \cite{Stoer:2002:INA}, and Trefethen and Bau \cite{Trefethen:1997:NLA}, for example.

\begin{theorem}\label{th:SVD} Let $\AAA\in\mathbb{C}^{r\times s}$,
  $r\geq s$. Then there exist unitary matrices $\UU\in\mathbb{C}^{r\times s}$,
  $\VV\in\mathbb{C}^{s\times s}$, and a diagonal matrix $\SSigma=
  \text{\em diag}(\sigma_1,\ldots,\sigma_s)\in\mathbb{R}^{s\times
  s}$, with $\sigma_1\geq\sigma_2\geq\cdots\geq\sigma_s\geq0$,
  such that
  $$ \AAA=\UU\SSigma\VV^*.$$
  Furthermore, if $\UU=[\,\uu_1\,|\,\cdots\,|\,\uu_s\,]$ and
  $\VV=[\,\vv_1\,|\,\cdots\,|\,\vv_s\,]$, then
  $$ \AAA\vv_i=\sigma_i\uu_i, \quad \AAA^*\uu_i=\sigma_i\vv_i,\quad
  i=1,\ldots,s.$$ In case $\text{\em rank}(\AAA)=t$, there holds
  $\sigma_i>0$, $i=1,\ldots,t,$  and the rest of the $\sigma_i$ are zero. \end{theorem}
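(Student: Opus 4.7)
The plan is to reduce the statement to the spectral theorem applied to $\AAA^*\AAA$. First I would observe that $\AAA^*\AAA\in\mathbb{C}^{s\times s}$ is Hermitian and positive semi-definite, so the spectral theorem produces an orthonormal eigenbasis $\vv_1,\ldots,\vv_s$ with real eigenvalues $\lambda_1\geq\cdots\geq\lambda_s\geq 0$; collecting the $\vv_i$ as columns of a unitary matrix $\VV$ and writing $\LLambda=\mathrm{diag}(\lambda_1,\ldots,\lambda_s)$, this gives $\AAA^*\AAA=\VV\LLambda\VV^*$. Setting $\sigma_i=\sqrt{\lambda_i}$ delivers the singular values in the required decreasing order, and since $\|\AAA\vv_i\|^2=\vv_i^*\AAA^*\AAA\vv_i=\lambda_i$, the number of strictly positive $\sigma_i$ equals $\rank(\AAA^*\AAA)=\rank(\AAA)=t$, which already yields the final assertion of the theorem.

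Next I would construct $\UU$. For each index $i\leq t$ define $\uu_i=\sigma_i^{-1}\AAA\vv_i$; a short calculation shows $\uu_i^*\uu_j=(\sigma_i\sigma_j)^{-1}\vv_i^*\AAA^*\AAA\vv_j=(\sigma_i\sigma_j)^{-1}\lambda_j\delta_{ij}=\delta_{ij}$, so $\uu_1,\ldots,\uu_t$ form an orthonormal set in $\mathbb{C}^r$, and they evidently span $\mathrm{range}(\AAA)$. If $t<s$, I would extend this list to an orthonormal system $\uu_1,\ldots,\uu_s\in\mathbb{C}^r$ by appending vectors drawn from $(\mathrm{range}\,\AAA)^\perp=\ker(\AAA^*)$; such a completion exists because $r\geq s$ forces $\dim\ker(\AAA^*)=r-t\geq s-t$. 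Then $\UU=[\,\uu_1\,|\cdots|\,\uu_s\,]$ has orthonormal columns, as required.

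Finally, I would verify the two stated identities. The relation $\AAA\vv_i=\sigma_i\uu_i$ holds by construction for $i\leq t$ and trivially for $i>t$, since there both sides vanish. Stacking these columnwise gives $\AAA\VV=\UU\SSigma$, and multiplying on the right by $\VV^*$ produces $\AAA=\UU\SSigma\VV^*$. For the adjoint relation, applying $\AAA^*$ to $\AAA\vv_i=\sigma_i\uu_i$ and using $\AAA^*\AAA\vv_i=\lambda_i\vv_i$ yields $\AAA^*\uu_i=\sigma_i\vv_i$ whenever $\sigma_i>0$; for $i>t$, the equality $\AAA^*\uu_i=0=\sigma_i\vv_i$ holds precisely because the extending $\uu_i$ were drawn from $\ker(\AAA^*)$. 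This last point is the only genuine subtlety in the argument: an arbitrary orthonormal completion would not in general satisfy $\AAA^*\uu_i=0$, so the freedom in the Gram--Schmidt completion must be exercised to place the additional vectors inside $\ker(\AAA^*)$. Once this is noticed, everything else is mechanical bookkeeping.
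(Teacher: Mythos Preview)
Your proof is correct and is in fact one of the standard textbook arguments for the reduced SVD. However, there is nothing to compare it against: the paper does not prove Theorem~\ref{th:SVD} at all. It merely states the result and then refers the reader to Golub and Van~Loan, Horn and Johnson, Stoer and Bulirsch, and Trefethen and Bau for proofs. So your write-up goes strictly beyond what the paper itself supplies.

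As a minor remark on the argument: your observation that the orthonormal completion $\uu_{t+1},\ldots,\uu_s$ must be chosen from $\ker(\AAA^*)$ in order to secure $\AAA^*\uu_i=\sigma_i\vv_i$ for $i>t$ is well taken and is exactly the point where careless presentations go wrong. One could alternatively sidestep this subtlety by deriving $\AAA^*\uu_i=\sigma_i\vv_i$ directly from the factorization $\AAA=\UU\SSigma\VV^*$ via $\AAA^*\UU=\VV\SSigma\UU^*\UU=\VV\SSigma$, which holds regardless of how the completion was chosen; but your route is equally valid.
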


 \noindent{\bf Remark:} The $\sigma_i$ are called the {\em singular values} of
 $\AAA$ and the $\vv_i$ and $\uu_i$ are called the corresponding {\em right} and {\em left}
 {\em singular vectors} of $\AAA$, respectively. We also have
  $$\AAA^*\AAA\vv_i=\sigma_i^2\vv_i,\quad \AAA\AAA^*\uu_i=\sigma_i^2\uu_i,\quad
  i=1,\ldots,s.$$

\section{Development of SVD-MPE}\label{se2}
\setcounter{theorem}{0} \setcounter{equation}{0}

In what follows, we use boldface lower case letters for vectors and boldface upper case letters for matrices. In addition, we will be working with  general inner products $(\cdot\,,\cdot)$ and the $l_2$ norms $\|\cdot\|$ induced by them: These are defined as follows:
\begin{itemize}
\item In $\C^N$, with $\MM\in\C^{N\times N}$ hermitian positive definite,
\beq \label{eq000} (\aaa,\bb)_{\MM}=\aaa^*\MM\bb,\quad \|\aaa\|_{\MM}=\sqrt{(\aaa,\aaa)_{\MM}}.\eeq
\item In $\C^{k+1}$, $k=1,2,\ldots,$ with $\LL_k\in\C^{(k+1)\times(k+1)}$ hermitian positive definite,
\beq\label{eq001}(\aaa,\bb)_{\LL_k}=\aaa^*\LL_k\bb,\quad \|\aaa\|_{\LL_k}=\sqrt{(\aaa,\aaa)_{\LL_k}}.\eeq
\end{itemize}
Of course, the standard Euclidean  inner product $\aaa^*\bb$  and the $l_2$ norm $\sqrt{\aaa^*\aaa}$ induced by it are obtained by letting $\MM=\II$ in \eqref{eq000} and $\LL_k=\II$ in \eqref{eq001}; we will denote these norms by $\|\cdot\|_2$. (We will denote by $\II$ the identity matrix in every dimension.)

\subsection{Summary of MPE}
We begin with a brief summary of  minimal polynomial extrapolation (MPE). We use the ideas that follow to develop our new method.

Given the vector sequence $\{\xx_m\}$ in $\C^N$, we define
\beq\label{eq1} \uu_m=\xx_{m+1}-\xx_m,\quad m=0,1,\ldots,\eeq and, for some fixed $n$,  define the matrices
$\UU_k$ via
\beq\label{eq2} \UU_k=[\,\uu_n\,|\,\uu_{n+1}\,|\,\cdots\,|\,\uu_{n+k}\,]\in\C^{N\times(k+1)}.\eeq
Clearly, there is an integer $k_0\leq N$, such that  the matrices $\UU_k$, $k=0,1,\ldots, k_0-1$,  are of full rank, but   $\UU_{k_0}$ is not; that is,
\beq \label{eq2a} \rank\,(\UU_k)=k+1,\quad k=0,1,\ldots,k_0-1;\quad \rank\,(\UU_{k_0})=k_0.\eeq
(Of course, this is the same as saying that $\{\uu_0,\uu_1,\ldots,\uu_{k_0-1}\}$ is a linearly independent set,
but $\{\uu_0,\uu_1,\ldots,\uu_{k_0}\}$ is not.)
Following this, we pick a positive integer $k<k_0$ and let the vector $\cc'=[c_0,c_1,\ldots,c_{k-1}]^\text{T}\in \C^{k}$ be the solution to
\beq\label{eq3} \min_{c_0,c_1,\ldots,c_{k-1}}\bigg\|\sum^{k-1}_{i=0}c_i\uu_{n+i}+\uu_{n+k}\bigg\|_{\MM}.\eeq
This minimization problem can also be expressed as in
\beq\label{eq4} \min_{\cc'}\|\UU_{k-1}\cc'+\uu_{n+k}\|_{\MM},\eeq and, as is easily seen, $\cc'$ is the standard least-squares solution to the linear system $\UU_{k-1}\cc'=-\uu_{n+k}$, which, when $k<N$, is overdetermined, and generally inconsistent.
With $c_0,c_1,\ldots,c_{k-1}$ determined, set $c_k=1$, and compute the scalars $\gamma_0,\gamma_1,\ldots,\gamma_k$ via
\beq\label{eq5} \gamma_i=\frac{c_i}{\sum^{k}_{j=0}c_j},\quad i=0,1,\ldots,k,\eeq
provided $\sum^{k}_{j=0}c_j\neq0$. Note that
\beq\label{eq6}\sum^k_{i=0}\gamma_i=1.\eeq
Finally, set
\beq\label{eq7} \sss_{n,k}=\sum^k_{i=0}\gamma_i\xx_{n+i}\eeq
as the approximation to $\sss$, whether $\sss$ is the limit or antilimit of $\{\xx_m\}$.

What we have so far is only the definition (or the theoretical development) of MPE as a method. It should not be taken as an efficient  computational procedure (algorithm), however. For this topic, see
\cite{Sidi:1991:EIM}, where  numerically stable and computationally and storagewise economical algorithms for MPE and RRE are designed for the case in which $\MM=\II$. A well documented FORTRAN 77 code for implementing  MPE and RRE in a unified manner is also provided in \cite[Appendix B]{Sidi:1991:EIM}.

\subsection{Development of SVD-MPE}
We start by observing that the unconstrained minimization problem for MPE given in \eqref{eq4} can also be expressed as a superficially ``constrained'' minimization problem as in
\beq \label{eq4a}\min_{\cc}\|\UU_{k}\cc\|_{\MM},\quad \text{subject to}\quad c_k=1;\quad \cc=[c_0,c_1,\ldots,c_{k}]^\text{T}.\eeq
For the SVD-MPE method,
we replace  this ``constrained'' minimization problem by the following {\em actual} constrained minimization problem:

\beq\label{eq12}\min_{\cc}\|\UU_{k}\cc\|_{\MM},\quad \text{subject to}\quad \|\cc\|_{\LL_k}=1;\quad \cc=[c_0,c_1,\ldots,c_{k}]^\text{T}.\eeq
With $c_0,c_1,\ldots,c_{k}$ determined, we again compute
$\gamma_0,\gamma_1,\ldots,\gamma_k$ via
\beq\label{eq13} \gamma_i=\frac{c_i}{\sum^{k}_{j=0}c_j},\quad i=0,1,\ldots,k,\eeq
provided $\sum^{k}_{j=0}c_j\neq0$, noting again that
\beq\label{eq14}\sum^k_{i=0}\gamma_i=1.\eeq
Finally, we set
\beq\label{eq15} \sss_{n,k}=\sum^k_{i=0}\gamma_i\xx_{n+i}\eeq
as the SVD-MPE approximation to $\sss$, whether $\sss$ is the limit or antilimit of $\{\xx_m\}$.

Of course,  the minimization problem in \eqref{eq12} has a solution for $\cc=[c_0,c_1,\ldots,c_{k}]^\text{T}$. Let   $\sigma_{\min}=\|\UU_k\cc\|_{\MM}$ for this (optimal) $\cc$. Lemma \ref{le11} that follows next gives a complete characterization of $\sigma_{\min}$ and the (optimal) $\cc$.

\begin{lemma} \label{le11}Let $\sigma_{k0},\sigma_{k1},\ldots,\sigma_{kk}$ be the singular values of the $N\times(k+1)$ matrix
\beq \label{eqML1} \widetilde{\UU}_k=\MM^{1/2}\UU_k\LL_k^{-1/2},\eeq
ordered as in
\beq \sigma_{k0}\geq\sigma_{k1}\geq\cdots\geq\sigma_{kk}, \eeq and  let $\hh_{ki}$ be the corresponding right singular vectors of $\widetilde{\UU}_k$, that is,
\beq  \label{eqML2}\widetilde{\UU}_k^*\widetilde{\UU}_k\hh_{ki}=\sigma_{ki}^2\hh_{ki},\quad
\|\hh_{ki}\|_2=1,\quad  i=0,1,\ldots,k.\eeq
Assuming that $\sigma_{kk}$, the smallest singular value of $\widetilde{\UU}_k$, is simple,
the (optimal) solution $\cc$ to the minimization problem in \eqref{eq12} is unique (up to a multiplicative constant $\tau$, $|\tau|=1$), and is given  as in
\beq \label{eqML3}
\cc=\LL_k^{-1/2}{\hh}_{kk};\quad \sigma_{\min}=\|\UU_k\cc\|_{\MM}=\sigma_{kk}.\eeq
\end{lemma}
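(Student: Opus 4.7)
The plan is to reduce the constrained minimization in \eqref{eq12} to a standard SVD minimization on the Euclidean unit sphere by a change of variables dictated by the two weighted inner products. Since $\MM$ and $\LL_k$ are hermitian positive definite, their (positive definite) square roots $\MM^{1/2}$ and $\LL_k^{1/2}$ exist and are invertible. I would rewrite the objective as
\[
\|\UU_k\cc\|_{\MM}^2 = \cc^*\UU_k^*\MM\UU_k\cc = \|\MM^{1/2}\UU_k\cc\|_2^2,
\]
and the constraint as $\|\cc\|_{\LL_k}^2 = \|\LL_k^{1/2}\cc\|_2^2 = 1$. Introducing $\dd = \LL_k^{1/2}\cc$, so that $\cc = \LL_k^{-1/2}\dd$, the problem \eqref{eq12} becomes
\[
\min_{\dd}\|\widetilde{\UU}_k\dd\|_2\quad\text{subject to}\quad\|\dd\|_2=1,
\]
with $\widetilde{\UU}_k=\MM^{1/2}\UU_k\LL_k^{-1/2}$ as in \eqref{eqML1}. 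This is a bijective correspondence between admissible $\cc$ and unit $\dd$, so the minimum values and minimizers match exactly under $\cc=\LL_k^{-1/2}\dd$.

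Next I would invoke Theorem \ref{th:SVD} applied to $\widetilde{\UU}_k$. Writing its SVD as $\widetilde{\UU}_k=\sum_{i=0}^{k}\sigma_{ki}\uu_{ki}\hh_{ki}^*$ with the ordering $\sigma_{k0}\geq\cdots\geq\sigma_{kk}\geq0$, any unit $\dd\in\C^{k+1}$ expands as $\dd=\sum_{i=0}^{k}\alpha_i\hh_{ki}$ with $\sum_i|\alpha_i|^2=1$, because $\{\hh_{k0},\ldots,\hh_{kk}\}$ is an orthonormal basis of $\C^{k+1}$. Since the $\uu_{ki}$ are also orthonormal, Parseval gives
\[
\|\widetilde{\UU}_k\dd\|_2^2 \;=\; \sum_{i=0}^{k}\sigma_{ki}^2|\alpha_i|^2 \;\geq\; \sigma_{kk}^2\sum_{i=0}^{k}|\alpha_i|^2 \;=\; \sigma_{kk}^2,
\]
with equality iff all weight sits on indices $i$ for which $\sigma_{ki}=\sigma_{kk}$. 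Hence $\sigma_{\min}=\sigma_{kk}$. Under the assumption that $\sigma_{kk}$ is a simple singular value, this forces $|\alpha_k|=1$ and $\alpha_0=\cdots=\alpha_{k-1}=0$, giving $\dd=\tau\hh_{kk}$ with $|\tau|=1$. Translating back, $\cc=\LL_k^{-1/2}\hh_{kk}$ (up to the phase $\tau$), which is exactly \eqref{eqML3}.

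The argument is essentially routine; there is no real obstacle. The only point one must be careful about is the uniqueness statement. Without simplicity of $\sigma_{kk}$, the minimizing $\dd$ could be any unit vector in the (higher-dimensional) right singular subspace associated with $\sigma_{kk}$, and uniqueness up to a unimodular scalar would fail. The simplicity hypothesis in the lemma is exactly what collapses that subspace to a line, yielding uniqueness of $\cc$ up to multiplication by $\tau$ with $|\tau|=1$. I would end the proof with one line observing that the invertibility of $\LL_k^{1/2}$ preserves this uniqueness upon undoing the change of variables.
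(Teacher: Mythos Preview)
Your argument is correct and follows the same approach as the paper: you perform the change of variables $\widetilde{\cc}=\LL_k^{1/2}\cc$ to reduce \eqref{eq12} to the standard Euclidean problem $\min_{\|\widetilde{\cc}\|_2=1}\|\widetilde{\UU}_k\widetilde{\cc}\|_2$, exactly as the paper does in \eqref{eqML4}--\eqref{eq12g}. The paper leaves the SVD/Rayleigh-quotient details to the reader, whereas you spell them out, but the route is identical.
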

\begin{proof} The proof is achieved by observing that, with  $\widetilde{\cc}=\LL_k^{1/2}\cc$,
\beq \label{eqML4} \|\UU_{k}\cc\|_{\MM}=\|\widetilde{\UU}_k\widetilde{\cc}\|_2\quad \text{and}\quad
 \|\cc\|_{\LL_k}=\|\widetilde{\cc}\|_2, \eeq
 so that the problem in \eqref{eq12} becomes
 \beq \label{eq12g}
 \min_{\widetilde{\cc}}\|\widetilde{\UU}_k\widetilde{\cc}\|_2,\quad \text{subject to}\quad
 \|\widetilde{\cc}\|_2=1. \eeq
 We leave the details   to the reader.\end{proof}

\noindent{\bf Remarks:} \begin{enumerate}
\item
 In view of the nature of the solution for the  (optimal) $\cc$ involving singular values and vectors, as described in Lemma  \ref{le11}, we call this new method SVD-MPE.
\item
Note that if $\rank\,(\UU_k)=k+1$, then $\rank\,(\widetilde{\UU}_k)=k+1$ too, and we therefore have
$\sigma_{kk}>0$.
\item Of course, $\sss_{n,k}$ exists if and only if the (optimal) $\cc=[c_0,c_1,\ldots,c_k]^\text{T}$ satisfies $\sum^k_{j=0}c_j\neq0$.
    In addition, by \eqref{eq13}, the $\gamma_i$ are unique when $\sigma_{kk}$ is simple.

\end{enumerate}

Before we go on to the development of our algorithm in the next section,  we state the following result concerning  the finite termination property of SVD-MPE, whose proof is very similar to that pertaining to MPE and RRE given in \cite{Sidi:2012:RTV}:

\begin{theorem}\label{thFTP} Let $\sss$ be the  solution to the nonsingular linear system $\xx=\TT\xx+\dd$, and let $\{\xx_m\}$ be the sequence obtained via the fixed-point iterative scheme
$\xx_{m+1}=\TT\xx_m+\dd$, $m=0,1,\ldots,$ with $\xx_0$ chosen arbitrarily. If $k$ is the degree of the minimal polynomial of $\TT$ with respect to $\eepsilon_n=\xx_n-\sss$ (equivalently, with respect to $\uu_n$),\footnote{Given a   matrix $\BB\in\C^{r\times r}$ and a nonzero vector $\aaa\in\C^r,$ the monic polynomial
$P(\lambda)$ is said to be a {\em minimal polynomial of $\BB$ with
respect to $\aaa$} if $P(\BB)\aaa=~\00$ and if $P(\lambda)$ has
smallest degree.\\  It is easy to show that the minimal polynomial $P(\lambda)$  of $\BB$ with respect to
$\aaa$ exists, is unique, and divides the minimal polynomial of
$\BB$, which in turn divides the characteristic polynomial of
$\BB$. $[$Thus, the degree of $P(\lambda)$ is at most $r,$ and its
zeros are some or all of the eigenvalues of $\BB$.$]$ Moreover, if
$Q(\BB)\aaa=\00$ for some polynomial $Q(\lambda)$ with $\deg
Q>\deg P$, then $P(\lambda)$ divides $Q(\lambda)$. Concerning this subject, see Householder \cite{Householder:1964:TMN}, for example.} then $\sss_{n,k}$ produced by  SVD-MPE satisfies $\sss_{n,k}=\sss$.
\end{theorem}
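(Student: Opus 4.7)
The plan is to exploit the linear structure of the iteration to reduce the SVD-MPE problem \eqref{eq12} to the minimal polynomial equation for $\TT$, show that its optimal value is zero, and then verify that the recovered weights $\gamma_i$ reproduce $\sss$ exactly.

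First, subtracting $\sss = \TT\sss + \dd$ from $\xx_{m+1} = \TT\xx_m + \dd$ yields $\eepsilon_{m+1} = \TT\eepsilon_m$, hence
\beq
\uu_{n+i} \;=\; (\TT-\II)\eepsilon_{n+i} \;=\; \TT^i\uu_n,\qquad i=0,1,\ldots,k.
\eeq
Let $P(\lambda) = \sum_{i=0}^k a_i \lambda^i$, with $a_k = 1$, be the minimal polynomial of $\TT$ with respect to $\uu_n$. Then $P(\TT)\uu_n = \00$ reads $\UU_k\aaa = \00$ with $\aaa = [a_0,\ldots,a_k]^\text{T}$, while the minimality of $\deg P = k$ forces $\{\uu_n,\ldots,\uu_{n+k-1}\}$ to be linearly independent. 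Hence $\rank(\UU_k) = k$ and $\ker(\UU_k) = \mathrm{span}\{\aaa\}$.

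Next, since $\MM$ and $\LL_k$ are positive definite, $\widetilde{\UU}_k = \MM^{1/2}\UU_k\LL_k^{-1/2}$ has the same rank $k$ as $\UU_k$, so its smallest singular value $\sigma_{kk}=0$ is simple (with $\sigma_{k,k-1}>0$). Lemma \ref{le11} then guarantees that the optimal $\cc$ for \eqref{eq12} is unique up to a unimodular factor and yields $\sigma_{\min} = 0$; since $\aaa$ spans $\ker(\UU_k)$, we must have $\cc = \tau\aaa$ for some scalar $\tau\neq 0$.

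Finally, the nonsingularity of $\xx = \TT\xx + \dd$ means $\II - \TT$ is invertible, so $1$ is not an eigenvalue of $\TT$ and hence not a root of $P$. Thus $P(1) = \sum_i a_i \neq 0$, whence $\sum_i c_i = \tau P(1)\neq 0$ and $\gamma_i = c_i/\sum_j c_j = a_i/P(1)$ are well-defined and scale-invariant. Moreover, invertibility of $\TT - \II$ makes the minimal polynomial of $\TT$ with respect to $\uu_n = (\TT-\II)\eepsilon_n$ coincide with that with respect to $\eepsilon_n$; therefore $P(\TT)\eepsilon_n = \00$, i.e., $\sum_i a_i\,\eepsilon_{n+i} = \00$. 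Substituting $\eepsilon_{n+i} = \xx_{n+i} - \sss$ and dividing by $P(1)$ delivers $\sss_{n,k} = \sum_i \gamma_i\xx_{n+i} = \sss$. The only subtle point is verifying the hypothesis of Lemma \ref{le11} that $\sigma_{kk}$ is simple, which in turn reduces to the exact rank condition $\rank(\UU_k) = k$ — precisely what the minimality of $\deg P$ provides.
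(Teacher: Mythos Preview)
Your argument is correct and is exactly the kind of proof the paper has in mind: the paper does not write out its own proof of Theorem~\ref{thFTP} but merely remarks that it is ``very similar to that pertaining to MPE and RRE given in \cite{Sidi:2012:RTV}.'' Your write-up supplies precisely that adaptation, the only SVD-MPE–specific ingredient being your observation that $\rank(\UU_k)=k$ forces $\sigma_{kk}=0$ to be a \emph{simple} singular value of $\widetilde{\UU}_k$, so that Lemma~\ref{le11} pins down the optimal $\cc$ (up to a unimodular scalar) as a nonzero element of the one-dimensional kernel $\mathrm{span}\{\aaa\}$; from there the computation of $\gamma_i=a_i/P(1)$ and of $\sss_{n,k}=\sss$ is the standard MPE/RRE argument.
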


\section{Error estimation}\label{se3}
\setcounter{theorem}{0} \setcounter{equation}{0}
We now turn to the problem of estimating at zero cost the error $\sss_{n,k}-\sss$, whether $\sss$ is the limit or antilimit of $\{\xx_m\}$. Here we assume that $\sss$ is the solution to  the system of  equations
$$ \xx=\ff(\xx);\quad {\ff}:\mathbb{C}^N\to\mathbb{C}^N,\quad \xx\in\mathbb{C}^N,$$
and that the vector sequence $\{\xx_m\}$ is obtained via the fixed-point iterative scheme
$$\xx_{m+1}=\ff(\xx_m),\quad m=0,1,\ldots,$$ $\xx_0$ being the initial approximation to the solution $\sss$.

Now, if $\xx$ is some approximation to $\sss$, then a good measure of the error $\xx-\sss$ in $\xx$ is the {\em residual vector} $\rr(\xx)$ of $\xx$, namely,
$$ \rr(\xx)=\ff(\xx)-\xx. $$ This is justified since $\lim_{\xx\to\sss}\rr(\xx)=\rr(\sss)=\00$.
We consider two cases:
\begin{enumerate}
\item
{\em  $\ff(\xx)$ is linear; that is, $\ff(\xx)=\TT\xx+\dd$, where $\TT\in \C^{N\times N}$ and $\II-\TT$ is nonsingular.}\\
 In this case, we have
 $$\rr(\xx)=\TT\xx+\dd-\xx=(\TT-\II)(\xx-\sss),$$ and, therefore, by  $\sum^k_{i=0}\gamma_i=1$, $\sss_{n,k}=\sum^k_{i=0}\gamma_i\xx_{n+i}$ satisfies
$$\rr(\sss_{n,k})=
 \sum^k_{i=0}\gamma_i[(\TT\xx_{n+i}+\dd)-\xx_{n+i}]=\sum^k_{i=0}\gamma_i(\xx_{n+i+1}-\xx_{n+i})
 = \sum^k_{i=0}\gamma_i\uu_{n+i},$$ and thus
\beq \label{eq91}\rr(\sss_{n,k})
 =\UU_k\ggamma,\quad \ggamma=[\gamma_0,\gamma_1,\ldots,\gamma_k]^\text{T}.\eeq

\item
{\em $\ff(\xx)$ is nonlinear.}\\
In this case, assuming that $\lim_{m\to\infty}\xx_m=\sss$ and expanding $\ff(\xx_m)$ about $\sss$, we have
$$
 \xx_{m+1}=\ff(\sss)+\FF(\sss)(\xx_m-\sss)
 +O(\|\xx_m-\sss\|^2)\quad\text{as $m\to\infty$,}$$
 where $\FF(\xx)$ is the Jacobian matrix of the vector-valued
function $\ff(\xx)$ evaluated at $\xx$.
Recalling that $\sss=\ff(\sss)$, we rewrite this in the
form
 $$ \xx_{m+1}=\sss+\FF(\sss)(\xx_m-\sss)
 +O(\|\xx_m-\sss\|^2)\quad\text{as $m\to\infty$,}$$
from which, we conclude that the vectors $\xx_m$ and
$\xx_{m+1}$ satisfy the approximate equality
 $$\xx_{m+1}\approx
\sss+\FF(\sss)(\xx_m-\sss)\quad
\text{for all large $m$.}$$ That is, for all large $m$,  the
sequence $\{\xx_m\}$ behaves  as if it  were being generated by an
$N$-dimensional approximate {linear} system of the form $(\II-\TT)\xx\approx\dd$
through
$$\xx_{m+1}\approx\TT\xx_m+\dd,\quad m=0,1,\ldots,$$
where $\TT=\FF(\sss)$ and
$\dd=[\II-\FF(\sss)]\sss.$ In view of what we already know about $\rr(\sss_{n,k})$ for linear systems [from \eqref{eq91}], for nonlinear systems, close to convergence, we have
\beq \label{eq92}
\rr(\sss_{n,k})\approx\UU_k\ggamma,\quad \ggamma=[\gamma_0,\gamma_1,\ldots,\gamma_k]^\text{T}.\eeq
 \end{enumerate}

Now, we can compute $\|\UU_k\ggamma\|_{\MM}$ at no cost in terms of the quantities that result from our algorithm, without having to actually compute $ \UU_k\ggamma$ itself. Indeed, we   have the following  result:
\begin{theorem}\label{th:res} Let $\sigma_{kk}$ be the smallest singular value of $\widetilde{\UU}_k$ and let $\hh_{kk}$ be
the corresponding right singular vector. Then the vector $\UU_k\ggamma$ resulting from $\sss_{n,k}$ satisfies
 \beq
\|\UU_k\ggamma\|_{\MM}=\frac{\sigma_{kk}}{\big|\sum^k_{j=0}c_j\big|}.\eeq \end{theorem}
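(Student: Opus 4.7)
The plan is essentially a one-line computation resting entirely on Lemma~\ref{le11} and the definition \eqref{eq13} of the $\gamma_i$.

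First I would observe that \eqref{eq13} can be written in vector form as
\[
 \ggamma = \frac{1}{\sum_{j=0}^{k} c_j}\,\cc,
\]
where $\cc=[c_0,c_1,\ldots,c_k]^{\text{T}}$ is the optimal vector produced by the constrained minimization \eqref{eq12}. The scalar $\sum_{j=0}^k c_j$ is assumed nonzero, since this is precisely the condition for $\sss_{n,k}$ to exist (see Remark~3 following Lemma~\ref{le11}).

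Next, by linearity of matrix multiplication and absolute homogeneity of the norm $\|\cdot\|_{\MM}$, I would write
\[
 \|\UU_k\ggamma\|_{\MM}
 = \left\|\frac{1}{\sum_{j=0}^{k} c_j}\,\UU_k\cc\right\|_{\MM}
 = \frac{1}{\big|\sum_{j=0}^{k} c_j\big|}\,\|\UU_k\cc\|_{\MM}.
\]

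Finally, I invoke Lemma~\ref{le11}, which identifies the optimal value of the constrained minimization as the smallest singular value of $\widetilde{\UU}_k$; that is, $\|\UU_k\cc\|_{\MM}=\sigma_{kk}$. Substituting this into the previous display gives the claimed identity. There is no real obstacle here: the residual estimation formula is simply the reinterpretation of the already-computed optimal objective value $\sigma_{kk}$ under the rescaling $\cc\mapsto\ggamma$ dictated by the normalization $\sum_i\gamma_i=1$ used to form $\sss_{n,k}$. The practical content of the theorem is not in the proof but in the observation that $\sigma_{kk}$ and the denominator $\sum_{j=0}^k c_j$ are both by-products of the SVD-based algorithm, so $\|\UU_k\ggamma\|_{\MM}$ (and hence the residual norm estimate from \eqref{eq91}--\eqref{eq92}) is available at zero additional cost.
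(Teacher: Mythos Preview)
Your proposal is correct and follows essentially the same approach as the paper: write $\ggamma=\cc/\alpha$ with $\alpha=\sum_{j=0}^k c_j$, pull out the scalar from the norm, and invoke Lemma~\ref{le11} to identify $\|\UU_k\cc\|_{\MM}=\sigma_{kk}$.
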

\begin{proof}
First, the solution to \eqref{eq12} is  $\cc=\LL^{-1/2}\hh_{kk}$ by \eqref{eqML3}. Next,
 letting  $\alpha=\sum^k_{j=0}c_j$, we  have $\ggamma=\cc/\alpha$ by \eqref{eq13}.  Consequently,
 $$ \UU_k\ggamma=\frac{\UU_k\cc}{\alpha}. $$ Thus, by Lemma \ref{le11}, we have
 $$\|\UU_k\ggamma\|_{\MM}=\frac{\|\UU_k\cc\|_{\MM}}{|\alpha|}=
 \frac{\sigma_{kk}}{|\alpha|},$$
 which is the required result.\hfill
  \end{proof}

\section{Algorithm for SVD-MPE}\label{se4}
\setcounter{theorem}{0} \setcounter{equation}{0}
We now turn to the design of a good algorithm for constructing numerically the approximation $\sss_{n,k}$ that results from SVD-MPE. We note that matrix computations in floating-point
arithmetic must be done with care, and this is what we would like to achieve here.

In this section, we let $\MM=\II$ and $\LL_k=\II$ for simplicity. Thus, $\widetilde{\UU}_k=\UU_k$. Since there is no room for confusion, we will also use $\sigma_i$,
 $\hh_i$, and $\gg_i$
to denote
$\sigma_{ki}$,  $\hh_{ki}$, and $\gg_{ki}$, respectively.

 As we have seen in Section \ref{se2}, to determine $\sss_{n,k}$, we need $\hh_{k}$, the right singular vector of $\UU_{k}$
 corresponding to its smallest singular value
$\sigma_{k}$.
Now, $\sigma_{k}$ and $\hh_{k}$ can be obtained from the singular value decomposition (SVD) of $\UU_k\in \C^{N\times (k+1)}$.   Of course, the SVD of $\UU_k$ can be computed by applying directly to $\UU_k$ the algorithm of Golub and Kahan \cite{Golub:1965:CSV}, for example. Here we choose to apply SVD to $\UU_k$ in an {\em indirect} way, which will result in a very efficient algorithm  for SVD-MPE  that is economical
both computationally and storagewise in an optimal way.
Here are the details of the computation of the SVD of $\UU_k$, assuming that $\rank\, (\UU_k)=k+1$:
\begin{enumerate}
\item
We first compute the QR factorization of $\UU_k$ in the form
\beq \label{eq31}\UU_k=\QQ_k\RR_k;\quad
 \QQ_k\in \mathbb{C}^{N\times(k+1)},
 \quad
 \RR_k\in \mathbb{C}^{(k+1)\times(k+1)},
 \eeq
 where $\QQ_k$ is unitary (that is, $\QQ_k^*\QQ_k=\II$)  and $\RR_k$ is upper triangular with positive diagonal elements, that is,
 \beq\label{eq32}
  \QQ_k=[\,\qq_0\,|\,\qq_1\,|\,\cdots\,|\,\qq_k\,],
   \quad
\RR_k=\begin{bmatrix}r_{00}&r_{01}&\cdots&r_{0k}\\
&r_{11}&\cdots&r_{1k}\\
&&\ddots&\vdots\\
&&&r_{kk}\end{bmatrix},\eeq
\beq \label{eq33}\qq^*_i\qq_j=\delta_{ij} \quad \forall\ i,j;
 \quad r_{ij}=\qq^*_i\uu_j\quad \forall\ i\leq j;\quad r_{ii}>0\quad \forall\
 i.\eeq
Of course, we can carry out the QR factorizations in different ways. Here we do this by the {\em modified Gram--Schmidt process} (MGS) as follows:
\begin{tabbing}
 m  \=m \=m  \=m \=m  \kill
 1.\> Compute $r_{00}=\|\uu_n\|_2$ and $\qq_0=\uu_n/r_{00}$.\\
 2.\>{\bf For $j=1,\ldots,k$ do} \\
 \>\> Set $\uu^{(0)}_j=\uu_{n+j}$\\
\>\>{\bf For $i=0,1,\ldots,j-1$ do} \\
\>\>\>$r_{ij}=\qq_i^*\uu^{(i)}_j$ and
$\uu^{(i+1)}_j=\uu^{(i)}_j-r_{ij}\qq_i$\\
\>\>{\bf end do $(i)$} \\
\>\>Compute $r_{jj}=\|\uu^{(j)}_j\|_2$ and $\qq_j=\uu^{(j)}_j/r_{jj}$.\\
\>{\bf end do $(j)$}
\end{tabbing}

Note that the matrices $\QQ_{k}$ and $\RR_{k}$ are obtained from
$\QQ_{k-1}$ and $\RR_{k-1}$, respectively, as follows:
\beq\label{eq34} \QQ_{k}=[\,\QQ_{k-1}\,|\,\qq_k\,],\quad
\RR_k=\left[\begin{array}{c|c}\RR_{k-1} &\begin{matrix}r_{0k}\\ \vdots\\
r_{k-1,k}\end{matrix}\\ \hline
\begin{matrix}0&\cdots&0\end{matrix}&
r_{kk}\end{array}\right].\eeq

For MGS, see \cite{Golub:2013:MC}, for example.

\item  We next compute the SVD  of $\RR_k$: By Theorem \ref{th:SVD},  there exist unitary
matrices $\YY,\HH\in\mathbb{C}^{(k+1)\times(k+1)}$,
\beq \label{eq35}\YY=[\,\yy_0\,|\,\yy_1\,|\,\cdots\,|\,\yy_{k}\,],\ \
 \HH=[\,\hh_0\,|\,\hh_1\,|\,\cdots\,|\,\hh_{k}\,];\quad \YY^*\YY=\II,\ \ \HH^*\HH=\II.\eeq
 and a square diagonal matrix $\SSigma\in\mathbb{R}^{(k+1)\times(k+1)}$,
 \beq \label{eq36}\SSigma=\text{diag}\,(\sigma_0,\sigma_1,\ldots,\sigma_{k});\quad
\sigma_0\geq\sigma_1\geq\cdots\geq\sigma_{k}\geq0,\eeq such that
 \beq \label{eq37}\RR_k=\YY\SSigma\HH^*.\eeq
 In addition,  since $\RR_k$ is nonsingular  by our
assumption that $\rank\,(\UU_k)=k+1$, we have that  $\sigma_i>0$ for all $i$. Consequently, $\sigma_{k}>0$.
 \item
 Substituting \eqref{eq37} in \eqref{eq31}, we  obtain the following true singular value decomposition of $\UU_k$:
\beq\label{eq38}\begin{split}\UU_k=\GG\SSigma\HH^*; \quad &\GG=\QQ_k\YY \in\mathbb{C}^{N\times(k+1)} \quad \text{unitary}, \quad\GG^*\GG=\II;\\
&\GG=[\,\gg_0\,|\,\gg_1\,|\,\cdots\,|\,\gg_k\,],\quad \gg_i^*\gg_j=\delta_{ij}.\end{split}\eeq
Thus, $\sigma_i$, the singular values of $\RR_k$,  are also the singular values of $\UU_k$, and
  $\hh_i$, the corresponding  right singular vectors of $\RR_k$, are also the corresponding right singular vectors of $\UU_k$. [Of course, the $\gg_i$ are corresponding left singular vectors of $\UU_k$. Note that, unlike $\YY$, $\HH$, and $\SSigma$, which we {\em must} compute  for our algorithm,  we do {\em not} need to actually compute $\GG$. The mere  knowledge that the SVD of $\UU_k$ is as given in \eqref{eq38}  suffices to conclude that $\cc=\hh_{k}$ is the required optimal solution to \eqref{eq12}, and  continue with the development of our algorithm.]
\end{enumerate}

 With $\cc=\hh_{k}$ already determined, we next compute the $\gamma_i$ as in \eqref{eq13}; that is,
 \beq \ggamma=\frac{\cc}{\sum^k_{j=0}c_j}, \eeq provided $\sum^k_{j=0}c_j\neq0$.

  Next, by the fact that
  $$ \xx_{n+i}=\xx_n+\sum^{i-1}_{j=0}\uu_{n+j}$$ and by
  \eqref{eq14}, we can re-express   $\sss_{n,k}$ in \eqref{eq15} in the form
\beq\label{eq39} \sss_{n,k}=\xx_n+\sum^{k-1}_{j=0}\xi_j\uu_{n+j}=
 \xx_n+\UU_{k-1}\xxi;\quad \xxi=[\xi_0,\xi_1,\ldots,\xi_{k-1}]^\text{T},\eeq
 where the $\xi_i$ are computed from the $\gamma_j$ as in
 \beq \label{eq40}\xi_{-1}=1;\quad\xi_j=\sum^k_{i=j+1}\gamma_i
 =\xi_{j-1}-\gamma_j,\quad
j=0,1,\ldots,k-1. \eeq

Making use of the fact that $\UU_{k-1}=\QQ_{k-1}\RR_{k-1}$, with
\beq \QQ_{k-1}=[\,\qq_0\,|\,\qq_1\,|\,\cdots\,|\,\qq_{k-1}\,],
   \quad
\RR_{k-1}=\begin{bmatrix}r_{00}&r_{01}&\cdots&r_{0,k-1}\\
&r_{11}&\cdots&r_{1,k-1}\\
&&\ddots&\vdots\\
&&&r_{k-1,k-1}\end{bmatrix},\eeq
where the $\qq_i$ and the $r_{ij}$ are exactly those that feature in \eqref{eq32} and \eqref{eq33},
we  next rewrite \eqref{eq39} as in
 \beq \label{eq41}\sss_{n,k}=\xx_n+\QQ_{k-1}(\RR_{k-1}\xxi). \eeq
 Thus, the computation of $\sss_{n,k}$  can be carried out economically as in
 \beq\label{eq43}\sss_{n,k}=\xx_n+\QQ_{k-1}\eeta;\quad
 \eeta=\RR_{k-1}\xxi,\quad\eeta=[\eta_0,\eta_1,\ldots,\eta_{k-1}]^\text{T}.\eeq
Of course, $\QQ_{k-1}\eeta$ is best computed as a linear
  combination of the columns of $\QQ_{k-1}$, hence \eqref{eq43} is computed as in
  \beq \label{eq44}\sss_{n,k}=\xx_n+\sum_{i=0}^{k-1} \eta_i\qq_i.\eeq
It is clear that, for the computation in \eqref{eq43} and \eqref{eq44}, we need to save both $\QQ_k$ and $\RR_k$.

  This completes the design of our algorithm for implementing SVD-MPE. For convenience,  we provide a systematic description of this algorithm in Table \ref{table}.

  Note that the input vectors $\xx_{n+i}$, $i=1,\ldots,k+1,$ need not be saved; actually, they are overwritten by $\uu_{n+i}$, $i=0,1,\ldots,k, $ as the latter are being computed.
  As is clear from the description of MGS given above, we can overwrite the matrix $\UU_k$ simultaneously with the computation of $\QQ_k$ and $\RR_k$,  the vector $\qq_{n+j}$ overwriting $\uu_{n+j}$ as soon as it is computed,  $j=0,1,\ldots,k;$ that is, at any stage of the QR factorization, we store $k+2$ $N$-dimensional vectors in the memory. Since $N>>k$ in our applications, the storage requirement of the $(k+1)\times(k+1)$ matrix  $\RR_k$ is negligible. So is the cost of computing the SVD of $\RR_k$,  and so is the cost of computing the $(k+1)$-dimensional vector  $\eeta$. Thus, for all practical purposes,
    the computational and storage requirements of SVD-MPE are the same as those of MPE.
\medskip

\noindent{\bf Remark:} If we were to compute the SVD of $\UU_k$, namely, $\UU_k=\GG\SSigma\HH^*$, directly, and not by (i)\,first carrying out the QR factorization of $\UU_k$ as $\UU_k=\QQ_k\RR_k$, and (ii)\,then computing the SVD
of $\RR_k$ as $\RR_k=\YY\SSigma\HH^*$, then we would need to waste extra resources  in carrying out the computation of $\sss_{n,k}=\sum^k_{i=0}\gamma_i\xx_{n+i}=\xx_n+\UU_{k-1}\xxi.$
\begin{enumerate}
\item
We would either have to save $\UU_k$  while computing the matrix $\GG$ in its  singular value decomposition. Thus, we would need to save {\em two} $N\times (k+1)$ matrices, namely, $\UU_k$ and $\GG$ in core memory simultaneously.
\item
In case we are worried about storage, therefore,  do   not wish to save $\UU_k$,
we need to recompute the vectors $\xx_n,\xx_{n+1},\ldots,\xx_{n+k}$ in order to compute $\sss_{n,k}$.  Thus, we would need
to compute these vectors {\em twice} to complete the determination of  $\sss_{n,k}$.
\end{enumerate}
Thus, the approach we have proposed here for carrying out the singular value decomposition of $\UU_k$ enables us to save extra computing and storage very conveniently.

\begin{table}[t]
\small
\begin{center}
\fbox{\ \quad
\begin{minipage}{5in}
\begin{itemize}
\item [\textnormal{Step 0.}]
Input: The vectors $\xx_n,\xx_{n+1},\ldots,\xx_{n+k+1}$.
\item [\textnormal{Step 1.}]
Compute \ $\uu_i=\Delta \xx_i=\xx_{i+1}-\xx_i$,\ \ $i=n,n+1,\ldots,n+k$.\\
Set \ $\UU_k=[\,\uu_n\,|\,\uu_{n+1}\,|\,\cdots\,|\,\uu_{n+k}\,]\in\C^{N\times(k+1)}$.\\
Compute the   QR-factorization of $\UU_{k}$, namely,\
$\UU_{k}=\QQ_{k}\RR_{k}$,\\  $\QQ_{k}\in\C^{N\times(k+1)}$,  \ $\RR_{k}\in\C^{(k+1)\times(k+1)}$, \\
$\QQ_{k}$ unitary, $\RR_{k}$ upper triangular with positive diagonal:\\
$ \QQ_{k}=[\,\qq_0\,|\,\qq_1\,|\,\cdots\,|\,\qq_{k}\,],$\ \ \
$ \RR_k=\begin{bmatrix}r_{00}&r_{01}&\cdots&r_{0,k}\\
&r_{11}&\cdots&r_{1,k}\\
&&\ddots&\vdots\\
&&&r_{k,k}\end{bmatrix},$ \\
$\qq_i^*\qq_j=\delta_{ij}\ \ \forall\ i,j; \ \ r_{ij}=\qq_i^*\uu_j\ \ \forall\ i,j, \ \ r_{ii}>0\ \ \forall\ i.$
\item [\textnormal{Step 2.}]
Compute the SVD of $\RR_k$, namely, $\RR_k=\YY\SSigma\HH^*$, \\
$\YY,\HH\in \C^{(k+1)\times(k+1)}$ \ \ unitary,\ \ $\SSigma\in \mathbb{R}^{(k+1)\times(k+1)}$ \ \ diagonal:\\
$\YY=[\,\yy_0\,|\,\yy_1\,|\,\cdots\,|\,\yy_{k}\,]$,\ \
$\HH=[\,\hh_0\,|\,\hh_1\,|\,\cdots\,|\,\hh_{k}\,]$,\ \ $\YY^*\YY=\HH^*\HH=\II,$\\
 $\SSigma=\text{diag}\,(\sigma_0,\sigma_1,\ldots,\sigma_{k}),$ \ \ \
$\sigma_0\geq\sigma_1\geq\cdots\geq\sigma_{k}\geq0$.
\item
[\textnormal{Step 3.}]
Set \ $\cc=[c_0,c_1,\ldots,c_{k}]^\text{T}=\hh_{k}$,\ \ and compute \
$\alpha=\sum^k_{j=0}c_j$.\\
Set \ $\gamma_i=c_i/\alpha$, \ \ $i=0,1,\ldots,k,$\ \ provided $\alpha\neq0$.\\
 Compute \ $\xxi=[\xi_0,\xi_1,\ldots,\xi_{k-1}]^\text{T}$ \  via\\
$\xi_0=1-\gamma_0;\quad \xi_j=\xi_{j-1}-\gamma_j,\quad j=1,\ldots,k-1.$
\item
[\textnormal{Step 4.}]
Compute \ $\sss_{n,k}$\  via \
$\sss_{n,k}=\xx_n+\QQ_{k-1}\RR_{k-1}\xxi$ \ as follows:\\
 Compute \ $\eeta=\RR_{k-1}\xxi$; \
 $\eeta=[\eta_0,\eta_1,\ldots,\eta_{k-1}]^\text{T}$.\\
 Compute \ $\sss_{n,k}=\xx_n+\QQ_{k-1}\eeta=\xx_n+\sum_{i=0}^{k-1} \eta_i\qq_i.$
\end{itemize}
\end{minipage}
}
\caption{\label{table} Algorithm for SVD-MPE.}
\end{center}
\end{table}

\section{Determinant representations for SVD-MPE} \label{se5}
\setcounter{theorem}{0} \setcounter{equation}{0}
In   \cite{Sidi:1986:CSP} and \cite{Sidi:1986:ACV}, determinant representations were derived for the vectors $\sss_{n,k}$ that are produced by the vector extrapolation methods MPE, RRE, MMPE, and TEA. These representations  have turned out to be very useful in the analysis of the algebraic and analytic properties of these methods. In particular, they were used for obtaining interesting  recursion relations among the $\sss_{n,k}$  and in proving sharp convergence and stability theorems for them.  We now derive two analogous determinant representations for $\sss_{n,k}$ produced by SVD-MPE.

The following lemma, whose proof can be found in \cite[Section 3]{Sidi:1986:ACV},
will be used in this derivation in Theorem \ref{th11}.
\begin{lemma}\label{le:det} Let $u_{i,j}$ and $\gamma_j$ be  scalars and let the
$\gamma_j$ satisfy the linear system
 \beq \label{eq:del}\begin{split}&\sum^k_{j=0}u_{i,j}\gamma_j=0,\quad i=0,1,\ldots,k-1,\\
&\sum^k_{j=0}\gamma_j=1.\end{split} \eeq Then, whether $v_j$ are
scalars or vectors, there holds
 \beq\label{eq:det1}
\sum^k_{j=0}\gamma_j\,
v_{j}=\frac{D(v_0,v_1,\ldots,v_k)}{D(1,1,\ldots,1)},\eeq where
\begin{equation}\label{eq:5-2}
D(v_0,v_1,\ldots,v_k)=
\begin{vmatrix}
v_0&v_1&\cdots&v_k\\ u_{0,0}&u_{0,1}&\cdots&u_{0,k}\\
u_{1,0}&u_{1,1}&\cdots&u_{1,k}\\ \vdots&\vdots&&\vdots\\
u_{k-1,0}&u_{k-1,1}&\cdots&u_{k-1,k}\end{vmatrix},
\end{equation} provided $D(1,1,\ldots,1)\neq0$.
In case the $v_i$ are vectors, the determinant
$D(v_0,v_1,\ldots,v_k)$ is defined via its expansion with respect
to its first row.
\end{lemma}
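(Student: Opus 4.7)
The plan is to interpret the system \eqref{eq:del} as a square linear system of $k+1$ equations in the $k+1$ unknowns $\gamma_0,\gamma_1,\ldots,\gamma_k$ and then apply Cramer's rule. Specifically, I would place the normalization $\sum_j \gamma_j=1$ as the first equation and the $k$ homogeneous conditions $\sum_j u_{i,j}\gamma_j=0$ as the remaining ones. The coefficient matrix of this system is then precisely the matrix whose determinant is $D(1,1,\ldots,1)$, and the right-hand side is the unit vector $\ee_1=(1,0,\ldots,0)^\text{T}$. The standing assumption $D(1,1,\ldots,1)\neq 0$ guarantees this matrix is invertible, so the solution is unique and, by Cramer's rule,
$$\gamma_j=\frac{\det M^{(j)}}{D(1,1,\ldots,1)},\qquad j=0,1,\ldots,k,$$
where $M^{(j)}$ denotes the coefficient matrix with its $j$-th column replaced by $\ee_1$.

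Substituting these expressions into the left-hand side of \eqref{eq:det1} yields
$$\sum_{j=0}^k\gamma_j v_j=\frac{1}{D(1,1,\ldots,1)}\sum_{j=0}^k v_j\det M^{(j)},$$
so the task reduces to identifying the numerator with $D(v_0,v_1,\ldots,v_k)$. I would do this by two cofactor expansions. Expanding $\det M^{(j)}$ along its $j$-th column, whose only nonzero entry is a $1$ in the top row, gives $\det M^{(j)}=(-1)^{j}N_j$, where $N_j$ is the $k\times k$ minor of the array $(u_{i,l})$ obtained by deleting its $l=j$ column. Expanding $D(v_0,v_1,\ldots,v_k)$ along its first row yields $\sum_j(-1)^{j}v_j N_j$ with the same minors $N_j$. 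Putting the two expansions side by side recovers exactly \eqref{eq:det1}.

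The vector-valued case requires no extra argument, since the lemma already defines $D(v_0,\ldots,v_k)$ in that situation via expansion along the first row, so the scalar identity applies componentwise. The only (very mild) obstacle in the whole proof is notational: one must keep careful track of signs in the two cofactor expansions and verify that the $k\times k$ minor left over after deleting the first row and the $j$-th column of $D(v_0,\ldots,v_k)$ coincides with the one left over after performing the same deletion on $M^{(j)}$. Both are manifestly the array $(u_{i,l})$ with its $l=j$ column omitted, so the match is immediate and no genuine difficulty arises.
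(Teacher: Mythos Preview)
Your argument is correct. The paper itself does not give a proof of this lemma; it simply states that the proof can be found in \cite[Section~3]{Sidi:1986:ACV}. Your Cramer's rule derivation is precisely the standard way to establish this identity and is almost certainly what appears in that reference: write the $(k+1)\times(k+1)$ system with the normalization row on top, observe that its coefficient determinant is $D(1,\ldots,1)$, solve for each $\gamma_j$ as a signed minor over $D(1,\ldots,1)$, and then recognize $\sum_j\gamma_j v_j$ as the first-row cofactor expansion of $D(v_0,\ldots,v_k)$. The sign bookkeeping you flag is indeed the only thing to watch, and you have handled it correctly.
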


For convenience of notation, we will  write
$$\LL_k=\begin{bmatrix} l_{00}&l_{01}&\cdots&\l_{0k}\\ l_{10}&l_{11}&\cdots&\l_{1k}\\
\vdots&\vdots&& \vdots\\ l_{k0}&l_{k1}&\cdots&\l_{kk}\end{bmatrix},\quad (\LL_k)_{ij}=l_{ij},\quad i,j=0,1,\ldots\ .$$ Then $\LL_{k-1}$ is the principal submatrix of $\LL_k$ obtained by deleting the last row and the last column of $\LL_k$. In addition,
$\LL_{k-1}$ is hermitian positive definite just like $\LL_k$.

Theorem \ref{th11} that follows gives our first determinant representation for $\sss_{n,k}$ resulting from SVD-MPE and is based   only on the smallest singular value $\sigma_{kk}$ of $\widetilde{\UU}_k$ and the corresponding right singular vector $\hh_{kk}$.

\begin{theorem}\label{th11}  Define
\beq \label{eq66}
u_{i,j}=(\uu_{n+i},\uu_{n+j})_{\MM}-\sigma_{kk}^2(\LL_k)_{ij},
\quad i,j=0,1,\ldots,k,\eeq and assume that
\beq \label{eq66m}\sigma_{kk}<\sigma_{k-1,k-1}.\footnote{From the Cauchy interlace theorem, we already know that $\sigma_{kk}\leq\sigma_{k-1,k-1}$.}\eeq
 Then, provided $D(1,1,\ldots,1)\neq0$,    $\sss_{n,k}$   exists and has  the determinant
representation
 \beq\label{eq:determ} \sss_{n,k}=
\frac{D(\xx_n,\xx_{n+1},\ldots,\xx_{n+k})} {D(1,1,\ldots,1)},
\end{equation}
where $D(v_0,v_1,\ldots,v_k)$ is the $(k+1)\times(k+1)$
determinant defined as in \eqref{eq:5-2} in Lemma \ref{le:det}
with the $u_{i,j}$ as in \eqref{eq66}.
\end{theorem}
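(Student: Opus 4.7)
The plan is to recognize that this theorem is essentially a direct application of Lemma \ref{le:det}: we need only verify that the $\gamma_j$ produced by SVD-MPE satisfy the linear system \eqref{eq:del} with the specific $u_{i,j}$ given in \eqref{eq66}, and then invoke the lemma with $v_j = \xx_{n+j}$.

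First I would translate the singular-value equation for $\hh_{kk}$ into a generalized eigenvalue equation for $\cc$. By \eqref{eqML1}, $\widetilde\UU_k^*\widetilde\UU_k = \LL_k^{-1/2}\UU_k^*\MM\UU_k\LL_k^{-1/2}$. Substituting $\hh_{kk} = \LL_k^{1/2}\cc$ into $\widetilde\UU_k^*\widetilde\UU_k\hh_{kk} = \sigma_{kk}^2\hh_{kk}$ (from \eqref{eqML2}) and multiplying on the left by $\LL_k^{1/2}$, one obtains
$$\UU_k^*\MM\UU_k\,\cc = \sigma_{kk}^2\,\LL_k\,\cc, \qquad \text{i.e.,} \qquad \bigl(\UU_k^*\MM\UU_k - \sigma_{kk}^2\LL_k\bigr)\cc = \00.$$
The $(i,j)$-entry of $\UU_k^*\MM\UU_k - \sigma_{kk}^2\LL_k$ is precisely $(\uu_{n+i},\uu_{n+j})_{\MM} - \sigma_{kk}^2(\LL_k)_{ij} = u_{i,j}$. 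Consequently $\sum_{j=0}^k u_{i,j}c_j = 0$ for every $i = 0,1,\ldots,k$. Assuming $\alpha = \sum_{j=0}^k c_j \neq 0$ (so that $\sss_{n,k}$ exists), division by $\alpha$ yields $\sum_{j=0}^k u_{i,j}\gamma_j = 0$ for all $i$; in particular for $i = 0,1,\ldots,k-1$. Together with $\sum_{j=0}^k \gamma_j = 1$ from \eqref{eq14}, the $\gamma_j$ satisfy exactly the system \eqref{eq:del}.

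Applying Lemma \ref{le:det} with $v_j = \xx_{n+j}$ then gives $\sss_{n,k} = \sum_{j=0}^k \gamma_j\xx_{n+j} = D(\xx_n,\ldots,\xx_{n+k})/D(1,\ldots,1)$, exactly \eqref{eq:determ}, provided $D(1,\ldots,1)\neq 0$. The role of the hypothesis $\sigma_{kk}<\sigma_{k-1,k-1}$ is to ensure that $\sigma_{kk}$ is a \emph{simple} singular value of $\widetilde\UU_k$: by the Cauchy singular value interlacing cited in the footnote, $\sigma_{k,i}\geq\sigma_{k-1,i}\geq\sigma_{k,i+1}$, so $\sigma_{k,k-1}\geq\sigma_{k-1,k-1}>\sigma_{kk}$, which together with the ordering \eqref{eq36} forces every other singular value of $\widetilde\UU_k$ to strictly exceed $\sigma_{kk}$. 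Simplicity then places $\cc$ (hence the $\gamma_j$) uniquely in the one-dimensional null space of $\UU_k^*\MM\UU_k - \sigma_{kk}^2\LL_k$, so the SVD-MPE $\sss_{n,k}$ is well defined and coincides with the determinant ratio.

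There is no real obstacle here beyond the bookkeeping of the similarity transformation $\hh_{kk}\leftrightarrow\cc$. The only mild subtlety is noticing that the eigenvalue equation supplies $k+1$ linear relations $\sum_j u_{i,j}\gamma_j = 0$, one of which is redundant modulo the normalization, so that the $k$ relations demanded by Lemma \ref{le:det} for $i=0,\ldots,k-1$ are available without any additional argument; the final hypothesis $D(1,\ldots,1)\neq 0$ is precisely what guarantees that this reduced system together with $\sum_j\gamma_j=1$ still determines the $\gamma_j$ uniquely and permits the determinant expression.
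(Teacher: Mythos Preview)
Your proof follows the same backbone as the paper's: transform $\widetilde{\UU}_k^*\widetilde{\UU}_k\hh_{kk}=\sigma_{kk}^2\hh_{kk}$ into $(\UU_k^*\MM\UU_k-\sigma_{kk}^2\LL_k)\cc=\00$, read off $\sum_j u_{i,j}\gamma_j=0$ for all $i$, and invoke Lemma~\ref{le:det}. The difference lies in how the hypothesis $\sigma_{kk}<\sigma_{k-1,k-1}$ is deployed. You use it (via interlacing) only to force simplicity of $\sigma_{kk}$ and hence uniqueness of $\cc$. The paper instead uses it to show that the leading $k\times k$ block
\[
\BB_{k-1}'=\UU_{k-1}^*\MM\UU_{k-1}-\sigma_{kk}^2\LL_{k-1}
=\LL_{k-1}^{1/2}\bigl(\widetilde{\UU}_{k-1}^*\widetilde{\UU}_{k-1}-\sigma_{kk}^2\II\bigr)\LL_{k-1}^{1/2}
\]
is positive definite (since $\sigma_{k-1,k-1}^2$ is the least eigenvalue of $\widetilde{\UU}_{k-1}^*\widetilde{\UU}_{k-1}$), so that the first $k$ rows of the matrix $[u_{i,j}]$ are linearly independent. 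The paper regards this as what is needed to apply Lemma~\ref{le:det}; at minimum it guarantees that the condition $D(1,\ldots,1)\neq0$ is not vacuous (otherwise the last $k$ rows of every $D(\cdot)$ would be dependent), and it also underlies the equivalence $D(1,\ldots,1)\neq0\Leftrightarrow\sum_j c_j\neq0$ stated in the paper's subsequent remark. Your argument is logically valid for the stated implication, but the paper's route better explains why the strict inequality \eqref{eq66m} is the natural structural hypothesis for the determinant representation. One small omission on your side: you \emph{assume} $\alpha=\sum_j c_j\neq0$ rather than deducing it from $D(1,\ldots,1)\neq0$, whereas the theorem asserts existence of $\sss_{n,k}$ as part of its conclusion; this is immediate, since if $\alpha=0$ then $\cc$ is a nonzero vector annihilated by both the first $k$ rows of $[u_{i,j}]$ and the all-ones row, forcing $D(1,\ldots,1)=0$.
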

\begin{proof} With $\widetilde{\UU}_k$ as in \eqref{eqML1},
we start by rewriting \eqref{eqML2} in the form
\beq (\widetilde{\UU}_k^*\widetilde{\UU}_k-\sigma_{kk}^2\II){\hh}_{kk}=\00. \eeq
Invoking here  ${\hh}_{kk}=\LL_k^{1/2}\cc$, which follows from \eqref{eqML3}, and multiplying the resulting equality on the left by $\LL_k^{1/2}$, we obtain
\beq (\UU_k^*\MM\UU_k-\sigma_{kk}^2\LL_k)\cc=\00.\eeq
Dividing both sides of this equality by $\sum^k_{j=0}c_j$,
and invoking \eqref{eq13}, we have
\beq (\UU_k^*\MM\UU_k-\sigma_{kk}^2\LL_k)\ggamma=\00,\eeq
which, by the fact that $$(\UU_k^*\MM\UU_k)_{ij}=\uu_{n+i}^*\MM\uu_{n+j}=(\uu_{n+i},\uu_{n+j})_{\MM},$$ is the same as
\beq \label{eq67}\sum^k_{j=0}[(\uu_{n+i},\uu_{n+j})_{\MM}-\sigma_{kk}^2(\LL_k)_{ij}]\gamma_j=0\quad
\Rightarrow \quad \sum^k_{j=0}u_{i,j}\gamma_j=0,\quad i=0,1,\ldots,k, \eeq
where we have invoked \eqref{eq66}.
We will be able to apply Lemma \ref{le:det} to prove the validity of
 \eqref{eq:determ} if we show that, in \eqref{eq67},  the equations with $i=0,1,\ldots,k-1,$ are linearly independent, or, equivalently, the first $k$ rows of the matrix
 $$\BB_k={\UU}_k^*\MM{\UU}_k-\sigma_{kk}^2\LL_k$$ are linearly independent.
 By the fact that
$$ \UU_k=[\UU_{k-1}\,|\,\uu_{n+k}\,]\quad\text{and}\quad \LL_k=\left[\begin{array}{c|c}
\LL_{k-1}&\ll_k \\ \hline \ll_k^*& l_{kk}\end{array}\right],\quad \ll_k=[l_{0k},l_{1k},\ldots,l_{k-1,k}]^\text{T},$$
we have
$$\BB_k=\left[\begin{array} {c|c} \BB_{k-1}' & \pp \\ \hline \pp^* & \beta\end{array}\right],$$
where
$$\BB_{k-1}'=\UU_{k-1}^*\MM\UU_{k-1}-\sigma_{kk}^2\LL_{k-1},$$ and
$$ \pp=\UU_{k-1}^*\MM\uu_{n+k}-\sigma_{kk}^2\ll_{k},\quad \beta=\uu_{n+k}^*\MM\uu_{n+k}-\sigma_{kk}^2l_{kk}.$$
Invoking $\UU_{k-1}=\MM^{-1/2}\widetilde{\UU}_{k-1}\LL_{k-1}^{1/2}$, we obtain
$$ \BB_{k-1}'=\LL_{k-1}^{1/2}(\widetilde{\UU}_{k-1}^*\widetilde{\UU}_{k-1}-\sigma_{kk}^2\II)
\LL_{k-1}^{1/2}.$$ Since $\sigma_{k-1,k-1}^2$ is the smallest eigenvalue of $\widetilde{\UU}_{k-1}^*\widetilde{\UU}_{k-1}$ and since $\sigma_{k-1,k-1}>\sigma_{kk}$, it turns out that  $\BB_{k-1}'$ is positive definite, which guarantees that the first $k$ rows of $\BB_k$ are linearly independent. This completes the proof.
\end{proof}

\noindent{\bf Remark:} We note that the condition that $D(1,1\ldots,1)\neq0$ in Theorem \ref{th11} is equivalent to the condition  that $\sum^k_{j=0}c_j\neq0$, which we have already met in Section \ref{se2}.
\smallskip

The determinant representation given in Theorem \ref{th19} that follows is based on
the complete singular value decomposition of $\widetilde{\UU}_k$, hence is different from that given in Theorem \ref{th11}. Since there is no room for confusion, we will denote the singular values $\sigma_{ki}$ and  right and left singular vectors   $\hh_{ki}$ and $\gg_{ki}$ of  $\widetilde{\UU}_k$ by $\sigma_i$,  $\hh_{i}$ and $\gg_{i},$  respectively.

\begin{theorem}\label{th19} Let  $\widetilde{\UU}_k$ be as in \eqref{eqML1}, and let $$\widetilde{\UU}_k=\GG\SSigma\HH^*,\quad \GG\in\C^{N\times (k+1)},\quad
\HH\in \C^{(k+1)\times (k+1)},\quad \SSigma\in \mathbb{R}^{(k+1)\times (k+1)}$$
 be the singular value decomposition of $\widetilde{\UU}_k$; that is,
$$ \GG=[\,\gg_{0}\,|\,\gg_{1}\,|\,\cdots\,|\,\gg_{k}\,],\quad \gg_{i}^*\gg_{j}=\delta_{ij};
\quad  \HH=[\,\hh_{0}\,|\,\hh_{1}\,|\,\cdots\,|\,\hh_{k}\,],\quad \hh_{i}^*\hh_{j}=\delta_{ij},$$ and
$$ \SSigma=\text{\em diag}\,(\sigma_{0},\sigma_{1},\ldots,\sigma_{k}),\quad
\sigma_{0}\geq\sigma_{1}\geq\ldots\geq\sigma_{k}.$$
 Define
\beq \label{eq66f}
u_{i,j}=(\MM^{1/2}\gg_{i})^*\uu_{n+j}=\gg_{i}^*\MM^{1/2}\uu_{n+j},
\quad i=0,1,\ldots,k-1,\quad j=0,1,\ldots,k,\eeq
 Then,   $\sss_{n,k}$ has  the determinant
representation
 \beq\label{eq:determ17} \sss_{n,k}=
\frac{D(\xx_n,\xx_{n+1},\ldots,\xx_{n+k})} {D(1,1,\ldots,1)},
\end{equation}
where $D(v_0,v_1,\ldots,v_k)$ is the $(k+1)\times(k+1)$
determinant defined as in \eqref{eq:5-2} in Lemma \ref{le:det}
with the $u_{i,j}$ as in \eqref{eq66f}.
\end{theorem}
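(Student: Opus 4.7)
The plan is to invoke Lemma \ref{le:det} with $v_j=\xx_{n+j}$ and the scalars $u_{i,j}$ as defined in \eqref{eq66f}, so that \eqref{eq:determ17} falls out immediately of \eqref{eq:det1}. This reduces the theorem to verifying the two conditions in \eqref{eq:del} on the $\gamma_j$ produced by SVD-MPE, together with the nondegeneracy condition $D(1,1,\ldots,1)\neq 0$.

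The normalization $\sum_{j=0}^k\gamma_j=1$ is simply \eqref{eq14}. For the remaining $k$ equations, I would read off from $\widetilde{\UU}_k=\GG\SSigma\HH^*$ the fundamental identity $\widetilde{\UU}_k\hh_k=\sigma_k\gg_k$, so that $\gg_i^*\widetilde{\UU}_k\hh_k=\sigma_k\,\gg_i^*\gg_k=0$ for $i=0,\ldots,k-1$. Next I would substitute $\widetilde{\UU}_k=\MM^{1/2}\UU_k\LL_k^{-1/2}$ together with $\hh_k=\LL_k^{1/2}\cc$ (from \eqref{eqML3} of Lemma \ref{le11}) to rewrite this as $\gg_i^*\MM^{1/2}\UU_k\cc=0$, which, expanded column by column, reads $\sum_{j=0}^k u_{i,j}c_j=0$ for $i=0,\ldots,k-1$. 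Dividing by $\alpha=\sum_{j=0}^k c_j$ and invoking \eqref{eq13} then yields the orthogonality relations $\sum_{j=0}^k u_{i,j}\gamma_j=0$, as required.

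The remaining task is to show $D(1,1,\ldots,1)\neq 0$. In parallel with the remark following Theorem \ref{th11}, I would argue this is equivalent to $\alpha\neq 0$, which is implicit in the very existence of $\sss_{n,k}$. Concretely: if $\vv\in\C^{k+1}$ annihilates the matrix defining $D(1,\ldots,1)$, the bottom $k$ rows force $\widetilde{\UU}_k(\LL_k^{1/2}\vv)=\MM^{1/2}\UU_k\vv$ to be orthogonal to $\gg_0,\ldots,\gg_{k-1}$ while lying in the column span of $\widetilde{\UU}_k$, so it must equal $\lambda\gg_k$ for some scalar $\lambda$. Using $\rank\,\widetilde{\UU}_k=k+1$ with $\widetilde{\UU}_k\hh_k=\sigma_k\gg_k$ and $\sigma_k>0$, one recovers $\LL_k^{1/2}\vv=(\lambda/\sigma_k)\hh_k$ and hence $\vv=(\lambda/\sigma_k)\cc$. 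The top row equation $\sum_j v_j=0$ then forces $\lambda\alpha=0$, so $\vv=\00$. The main obstacle is precisely this last nondegeneracy step; once it is in place, everything else is a mechanical unpacking of the SVD of $\widetilde{\UU}_k$ and Lemma \ref{le:det} delivers \eqref{eq:determ17}.
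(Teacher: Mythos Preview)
Your proposal is correct and follows essentially the same route as the paper: both proofs use $\widetilde{\UU}_k\hh_k=\sigma_k\gg_k$ together with $\gg_i^*\gg_k=0$ to obtain $\gg_i^*\MM^{1/2}\UU_k\ggamma=0$ for $i=0,\ldots,k-1$, and then invoke Lemma~\ref{le:det}. The only difference is that you supply an explicit argument for $D(1,1,\ldots,1)\neq 0$ (reducing it, via injectivity of $\widetilde{\UU}_k$, to $\alpha\neq 0$), whereas the paper simply applies the lemma without addressing this point separately; your extra paragraph is a welcome clarification rather than a departure in method.
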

\begin{proof}
By Theorem \ref{th:SVD},
   \beq \widetilde{\UU}_k\hh_{k}=\sigma_{k}\gg_{k} \quad\text{and}\quad  \gg_{i}^*\gg_{k}=0,\quad i=0,1,\ldots,k-1\ .\eeq
Therefore,
\beq \label{eqgi1}\gg_{i}^*\widetilde{\UU}_k\hh_{k}=0,\quad i=0,1,\ldots,k-1\ .\eeq
By  \eqref{eqML1} and by the fact that $\cc=\LL_k^{-1/2}\hh_{k}$, which follows from \eqref{eqML3},
and by the fact that $\ggamma=\cc/\alpha$, $\alpha=\sum^k_{j=0}c_j$, which follows from \eqref{eq13},
and by \eqref{eqgi1}, we  then have
\beq  \label{eqgi2}
\gg_i^*\MM^{1/2}\UU_k\ggamma=\alpha^{-1}(\gg_i^*\MM^{1/2}\UU_k\cc)=
\alpha^{-1}(\gg_i^*\widetilde{\UU}_k\hh_k)=0,\quad i=0,1,\ldots,k-1\ .\eeq
But, by \eqref{eq66f},  \eqref{eqgi2} is the same as
$$\sum^k_{j=0}u_{i,j}\gamma_j=0,\quad i=0,1,\ldots,k-1.$$
Therefore, Lemma \ref{le:det} applies with $u_{i,j}$ as in \eqref{eq66f}, and the  result follows.
\end{proof}
  \section{SVD-MPE for linearly generated sequences}\label{se6}
  \setcounter{theorem}{0} \setcounter{equation}{0}
  In Sidi \cite{Sidi:1988:EVP}, we discussed the connection of the extrapolation methods  MPE, RRE, and TEA with Krylov subspace methods. We now want to extend the treatment of
  \cite{Sidi:1988:EVP} to SVD-MPE. Here we recall that a Krylov subspace method is also a
  projection method and that a projection method is defined uniquely by its right and left subspaces.\footnote{A projection method for the solution of the linear system $\CC\xx=\dd$, where $\CC\in \C^{N\times N}$,  is defined as follows: Let ${\cal Y}$ and ${\cal Z}$ be  $k$-dimensional subspaces of $\C^N$ and let $\xx_0$ be a given vector in $\C^N$. Then the projection method produces an approximation $\sss_k$ to the solution of $\CC\xx=\dd$ as follows: (i)\,$\sss_k=\xx_0+\yy$, $\yy\in {\cal Y}$, (ii)\,$\hh^*\rr(\sss_k)=0$ for every $\hh\in{\cal Z}$. ${\cal Y}$ and ${\cal Z}$ are called, respectively,  the right and left subspaces of the method. If ${\cal Y}$ is the Krylov subspace ${\cal K}_k(\CC;\rr_0)=\text{span}\{\rr_0,\CC\rr_0,\ldots,\CC^{k-1}\rr_0\}$, where $\rr_0=\dd-\CC\xx_0$, then the projection method is called a {\em Krylov subspace method.}}
  In the next theorem, we show that SVD-MPE is a bone fide Krylov subspace method and we identify  its right and left subspaces.

  Since there is no room for confusion, we will  use the notation of Theorem \ref{th19}.

  \begin{theorem}\label{th:vs}
  Let $\sss$ be the unique solution to the linear system  $\CC\xx=\dd,$ which we express in the form
  $$ (\II-\TT)\xx=\dd\quad \Rightarrow \quad \xx=\TT\xx+\dd;\quad \TT=\II-\CC,$$ and let the vector sequence $\{\xx_m\}$ be  produced by the fixed-point iterative scheme
  $$ \xx_{m+1}=\TT\xx_m+\dd,\quad m=0,1,\ldots\ .$$ Define the residual vector of $\xx$ via $\rr(\xx)=\dd-\CC\xx.$
  Let also $\sss_k\equiv\sss_{0,k}$ be the  approximation to $\sss$ produced by SVD-MPE. Then
  the following are true:
  \begin{enumerate}
  \item $\sss_k$ is of the form
 \beq\label{eq51}\sss_k=\xx_0+\sum^{k-1}_{i=0}\delta_i(\CC^i\rr_0)\quad \text{for some $\delta_i$};\quad \rr_0=\rr(\xx_0)=\dd-\CC\xx_0.\eeq
   \item \sloppypar
  The residual vector of $\sss_k$, namely, $\rr(\sss_k)$,  is orthogonal to $\text{\em span}\{\MM^{1/2}\gg_0,\MM^{1/2}\gg_1,\ldots,\MM^{1/2}\gg_{k-1}\}$.  Thus,
  \beq\label{eq52}(\MM^{1/2}\gg_i)^*\rr(\sss_k)=0,\quad i=0,1,\ldots,k-1.\eeq
  \end{enumerate}
   Consequently, SVD-MPE is a Krylov subspace method for the linear system $\CC\xx=\dd$,
   with the Krylov subspace ${\cal K}_k(\CC;\rr_0)=\text{\em span}\{\rr_0,\CC\rr_0,\ldots,\CC^{k-1}\rr_0\}$ as its right subspace and
   $\text{\em span}\{\MM^{1/2}\gg_0,\MM^{1/2}\gg_1,\ldots,\MM^{1/2}\gg_{k-1}\}$ as its left subspace.
  \end{theorem}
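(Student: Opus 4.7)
The plan is to verify the two structural claims (6.1) and (6.2) separately and then read off the right and left subspaces. Both claims will follow quickly from work already done: the Krylov structure of the $\uu_m$'s for linearly generated sequences, the residual identity $\rr(\sss_{n,k})=\UU_k\ggamma$ from Theorem 3.1, the substitution relation $\widetilde{\UU}_k=\MM^{1/2}\UU_k\LL_k^{-1/2}$ of Lemma 2.1, and the singular value identity $\widetilde{\UU}_k\hh_k=\sigma_k\gg_k$ from Theorem 1.1.

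\textbf{Step 1 (Right subspace).} First I would show the Krylov structure of the $\uu_m$. Since $\xx_{m+1}=\TT\xx_m+\dd$ and $\TT=\II-\CC$, one has $\uu_m=\xx_{m+1}-\xx_m=\dd-\CC\xx_m=\rr(\xx_m)$. Applying this at consecutive indices and subtracting gives $\uu_{m+1}=\TT\uu_m$, so inductively $\uu_m=\TT^m\rr_0$. Consequently
\[
\text{span}\,\{\uu_0,\uu_1,\ldots,\uu_{k-1}\}=\text{span}\,\{\rr_0,\TT\rr_0,\ldots,\TT^{k-1}\rr_0\}={\cal K}_k(\CC;\rr_0),
\]
the last identity holding because $\TT=\II-\CC$, so each $\TT^j\rr_0$ is a polynomial in $\CC$ of degree $j$ applied to $\rr_0$. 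From (4.13)--(4.14) of the algorithm in Section 4, $\sss_k=\xx_0+\UU_{k-1}\xxi$, hence $\sss_k-\xx_0\in{\cal K}_k(\CC;\rr_0)$, which is precisely (6.1).

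\textbf{Step 2 (Left subspace).} Next I would use Theorem 3.1 (equation (3.1)), namely $\rr(\sss_k)=\UU_k\ggamma$. Writing $\UU_k=\MM^{-1/2}\widetilde{\UU}_k\LL_k^{1/2}$ (inverting (2.13)) yields
\[
(\MM^{1/2}\gg_i)^*\rr(\sss_k)=\gg_i^*\MM^{1/2}\UU_k\ggamma=\gg_i^*\widetilde{\UU}_k(\LL_k^{1/2}\ggamma).
\]
From Lemma 2.1, $\cc=\LL_k^{-1/2}\hh_k$, and from (2.17), $\ggamma=\cc/\alpha$ with $\alpha=\sum_{j=0}^k c_j$, so $\LL_k^{1/2}\ggamma=\hh_k/\alpha$. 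Invoking Theorem 1.1, $\widetilde{\UU}_k\hh_k=\sigma_k\gg_k$, we obtain
\[
(\MM^{1/2}\gg_i)^*\rr(\sss_k)=\frac{\sigma_k}{\alpha}\,\gg_i^*\gg_k,
\]
which vanishes for $i=0,1,\ldots,k-1$ by orthonormality of $\{\gg_0,\ldots,\gg_k\}$. This is (6.2).

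\textbf{Step 3 (Assembly).} Finally I would invoke the definition of a Krylov subspace method recalled in the footnote. Step 1 shows $\sss_k-\xx_0$ lies in the Krylov subspace ${\cal K}_k(\CC;\rr_0)$, so this is the right subspace. Step 2 shows $\rr(\sss_k)$ is orthogonal to each $\MM^{1/2}\gg_i$, $i=0,\ldots,k-1$; since $\MM^{1/2}$ is nonsingular and the $\gg_i$ are orthonormal, $\text{span}\{\MM^{1/2}\gg_0,\ldots,\MM^{1/2}\gg_{k-1}\}$ is a $k$-dimensional subspace of $\C^N$, and it is the left subspace. Under the standard rank assumption that $\UU_{k-1}$ has full column rank (so ${\cal K}_k(\CC;\rr_0)$ is also $k$-dimensional), the projection method is well defined, and SVD-MPE qualifies as a Krylov subspace method in the stated sense.

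There is no real obstacle; the only point requiring care is keeping the $\MM^{1/2}$ and $\LL_k^{1/2}$ factors aligned when passing between $\UU_k$ and $\widetilde{\UU}_k$, which is precisely the device already used in the proof of Lemma 2.1 and Theorem 3.1.
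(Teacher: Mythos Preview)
Your proof is correct and follows essentially the same route as the paper. The only cosmetic difference is that the paper's own proof of part~2 simply cites the orthogonality relation $\gg_i^*\MM^{1/2}\UU_k\ggamma=0$ already established in the proof of Theorem~\ref{th19} (equation~\eqref{eqgi2}), whereas you reproduce that computation in full; the substance is identical.
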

  \begin{proof}
  With the $\xx_m$ generated as above, we have
  $$\uu_{m+1}=\TT\uu_m \quad\Rightarrow \quad \uu_m=\TT^m\uu_0,\quad m=0,1,\ldots\ . $$
  Therefore, $$\uu_0=\xx_1-\xx_0=\TT\xx_0+\dd-\xx_0=\dd-\CC\xx_0=\rr(\xx_0)$$ and
   $$  \sss_k=\xx_0+\sum^{k-1}_{i=0}\xi_i\uu_i=\xx_0+\sum^{k-1}_{i=0}\xi_i\TT^i\uu_0=
  \xx_0+\sum^{k-1}_{i=0}\xi_i\TT^i\rr_0.$$  Upon substituting $\TT=\II-\CC$ in this equality, we obtain  \eqref{eq51}.

   To prove \eqref{eq52}, we first recall that $\UU_k\ggamma=\rr(\sss_{k})$ by \eqref{eq91}.
  By this and by \eqref{eqgi2}, the result in  \eqref{eq52} follows.
    \end{proof}

\section{Numerical examples}\label{se7}
\setcounter{theorem}{0} \setcounter{equation}{0}
We now provide two examples that show the performance of SVD-MPE and compare it with MPE. In both examples,   SVD-MPE and MPE are   implemented with the standard Euclidean inner product and the norm induced by it. Thus, $\MM=\II$ and $\LL_k=\II$ throughout.

 As we have already mentioned, a major application
area of vector extrapolation methods is that of numerical solution
of large systems of linear or nonlinear equations $\boldsymbol{\psi}(\xx)=\00$ by
fixed-point iterations $\xx_{m+1}=\ff(\xx_m)$. [Here $\xx=\ff(\xx)$ is a possibly preconditioned form of $\ppsi(\xx)=\00$.] For SVD-MPE, as well as all other
polynomial  methods discussed in the literature, the computation of
the approximation $\sss_{n,k}$ to $\sss$, the solution of
$\boldsymbol{\psi}(\xx)=\00$, requires $k+1$ of the vectors $\xx_m$ to be stored
in the computer memory. For systems of very large dimension $N$,
this means that we should keep $k$ at a moderate size.
In view of this limitation, a practical strategy for systems of
equations is {\em cycling}, for which   $n$ and $k$ are fixed. Here
are the steps of cycling:
\begin{enumerate}
\item  [\textnormal{C0.}] Choose integers $n\geq 0$, and  $k\geq
1,$ and an initial vector $\xx_0$.
\item [\textnormal{C1.}]
Compute the vectors $\xx_1,\xx_2,\ldots,\xx_{n+k+1}$ [\,via
$\xx_{m+1}=\ff(\xx_m)$].
\item
[\textnormal{C2.}] Apply SVD-MPE (or MPE)  to the vectors
$\xx_n,\xx_{n+1},\ldots,\xx_{n+k+1}$, with end result $\sss_{n,k}$.
 \item [\textnormal{C3.}]
If $\sss_{n,k}$ satisfies accuracy test, stop.\\
Otherwise, set $\xx_0=\sss_{n,k}$, and go to Step~C1.
\end{enumerate}
We will call each application of steps C1--C3 a {\em cycle}, and
denote by $\sss^{(r)}_{n,k}$ the $\sss_{n,k}$ that is computed in
the $r$th cycle. We will also denote the initial vector $\xx_0$ in
step C0 by $\sss^{(0)}_{n,k}$. Numerical examples suggest that the
sequence $\{\sss^{(r)}_{n,k}\}^\infty_{r=0}$ has very good
convergence properties.

\begin{example}\label{ex1}{\em Consider the vector sequence $\{\xx_m\}$ obtained from $\xx_{m+1}=\TT\xx_m+\dd$, $m=0,1,\ldots,$ where
$$ \TT=0.06\times \begin{bmatrix}
5&2&1&1&&&&&\\ 2&6&3&1&1&&&&\\ 1&3&6&3&1&1&&&\\ 1&1&3&6&3&1&1&&\\
&1&1&3&6&3&1&1&\\ &&\ddots&\ddots&\ddots&\ddots&\ddots&\ddots&\ddots
\end{bmatrix},$$ and is  symmetric with respect to both main diagonals, and $\TT\in\mathbf{R}^{N\times N}$
The vector $\dd$ is such that the exact solution  to $\xx=\TT\xx+\dd$ is $\sss=[1,1,\ldots,1]^\text{T}$. We have $\rho(\TT)<1$, so that $\{\xx_m\}$ converges to $\sss$.

Figure \ref{fig11} shows the $l_2$ norms of the errors in $\sss_{n,k}$, $n=0,1,\ldots,$ with $k=5$ fixed. Here $N=100$.
Note that all of the approximations $\sss_{n,5}$ make use of the same (infinite) vector sequence
$\{\xx_m\}$, and, practically speaking, we are looking at how the methods behave as $n\to\infty$. It is interesting to see that SVD-MPE and MPE behave almost the same. Although we have a  rigorous asymptotic theory confirming the behavior of MPE in this mode as observed in Figure \ref{fig11} (see \cite{Sidi:1986:CSP}, \cite{Sidi:1994:CIR}--\cite{Sidi:1988:CSA}),  we do not have any such theory for SVD-MPE at the present.

Figure \ref{fig12} shows the $l_2$ norm of the error in $\sss_{n,k}$  in the cycling mode with $n=0$ and $k=20$. Now $N=1000$.}
\end{example}

\begin{example}\label{ex2}{\em
We now apply   SVD-MPE and MPE to the nonlinear system that arises from finite-difference approximation of the two-dimensional convection-diffusion equation

$$ -\nabla^2 u+Cu(u_x+u_y)=f, \quad (x,y)\in\Omega=(0,1)\times(0,1),$$ where $u(x,y)$ satisfies  homogeneous boundary conditions.  $f(x,y)$ is constructed  by setting $C=20$ in the differential equation and by taking

$$u(x,y)=10xy(1-x)(1-y)\exp(x^{4.5})$$ as the exact solution.

The equation is discretized on a square grid by approximating $u_{xx}$, $u_{yy}$, $u_x$, and $u_y$ by centered differences with truncation errors $O(h^2)$. Thus, letting $h=1/\nu$, and
$$ (x_i,y_j)=(ih,jh), \quad i,j=0,1,\ldots,\nu,$$ and
$$ u_x(x_i,y_j)\approx \frac{u(x_{i+1},y_j)-u(x_{i-1},y_j)}{2h},\quad
u_y(x_i,y_j)\approx \frac{u(x_{i},y_{j+1})-u(x_{i},y_{j-1})}{2h},$$ and
$$ -\nabla^2u(x_i,y_j)\approx \frac{4u(x_i,y_j)-u(x_{i+1},y_j)-u(x_{i-1},y_j)-u(x_{i},y_{j+1})-u(x_{i},y_{j-1})}{h^2},$$
we replace the differential equation by the finite difference equations
\begin{multline} \frac{4u_{ij}-u_{i+1,j}-u_{i-1,j}-u_{i,j+1}-u_{i,j-1}}{h^2}\notag \\
+Cu_{ij}\frac{u_{i+1,j}-u_{i-1,j}+u_{i,j+1}-u_{i,j-1}}{2h}=f(x_i,y_j),\quad 1\leq i,j\leq \nu-1,\end{multline} with
$$\quad u_{0,j}=u_{i,0}=u_{\nu,j}=u_{i,\nu}=0\quad \forall\ i,j.$$
Here $u_{ij}$ is the approximation to $u(x_i,y_j)$, as usual.

We first write the finite difference equations in a way that is analogous to the PDE
written in the form
$$-\nabla^2 u=f-Cu(u_x+u_y),$$ and split the matrix representing  $-\nabla^2$ to enable the use of the Jacobi and Gauss--Seidel methods as the iterative procedures to generate
the sequences $\{\xx_m\}$.

Figures \ref{fig21} and \ref{fig22} show the $l_2$ norms of the errors in $\sss_{n,k}$ from SVD-MPE and MPE in the cycling mode with
$n=0$ and $k=20$, the iterative procedures being, respectively, that of Jacobi
and that of Gauss--Seidel for the linear part $-\nabla^2 u$ of the PDE. Here $\nu=100$, so that the number of unknowns (the dimension) is $N=99^2$.}
\end{example}

\section*{Acknowledgement}
The author would like to thank Boaz Ophir for carrying out the computations reported in Section \ref{se7} of this work.


\newpage

\begin{figure}[p]
\includegraphics{}
\caption{\label{fig11} $l_2$ norm of error  in $\sss_{n,k}$, $n=0,1,\ldots,$ with $k=5$,
from MPE and SVD-MPE,
for  Example \ref{ex1} with  $N=100$.}
\end{figure}

\newpage
\begin{figure}[p]
\includegraphics{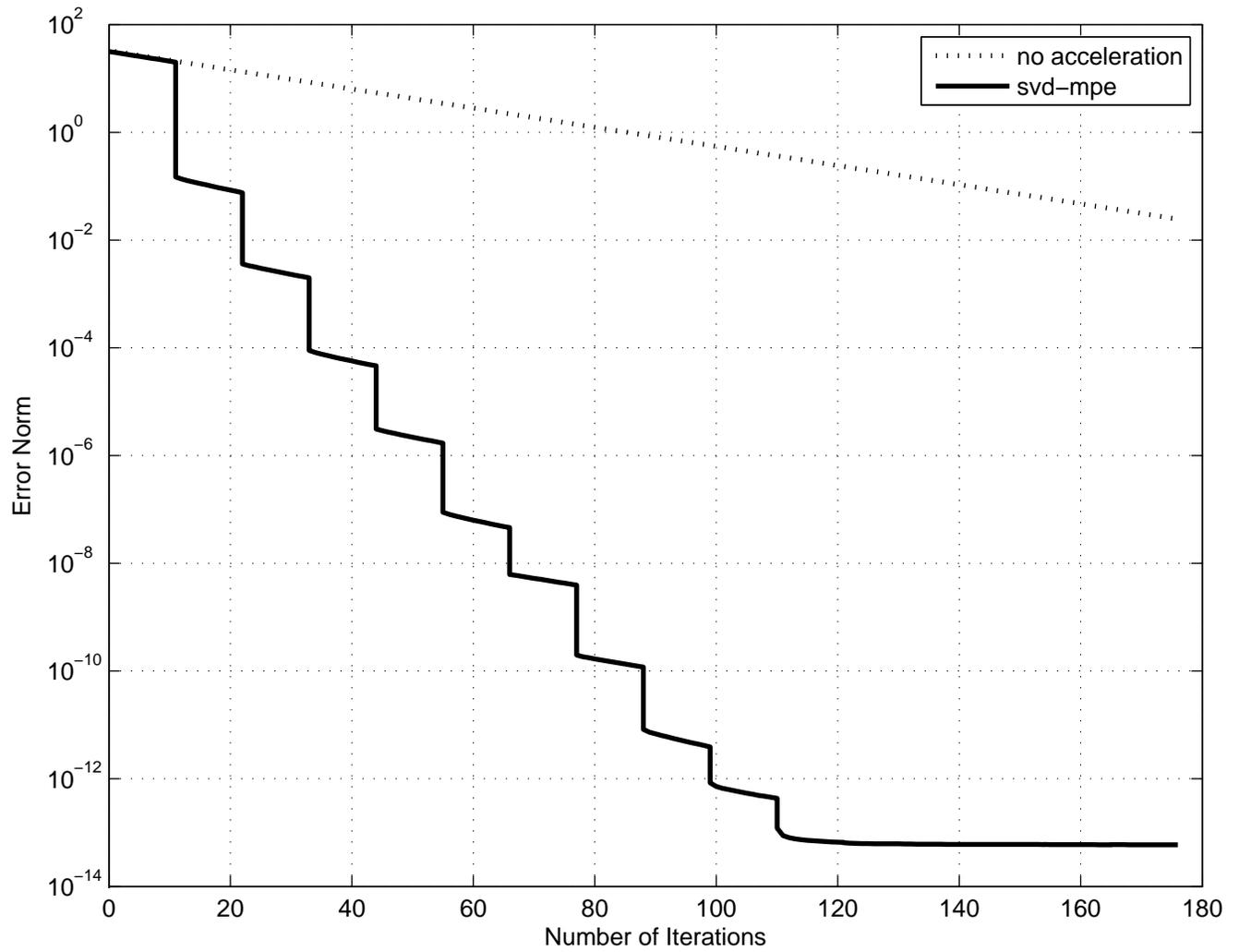}
\caption{\label{fig12} $l_2$ norm of error  in $\sss_{0,20}$ in the cycling mode,
from MPE and SVD-MPE,
for  Example \ref{ex1} with  $N=1000$.}
\end{figure}

\newpage
\begin{figure}[htbp]
\includegraphics{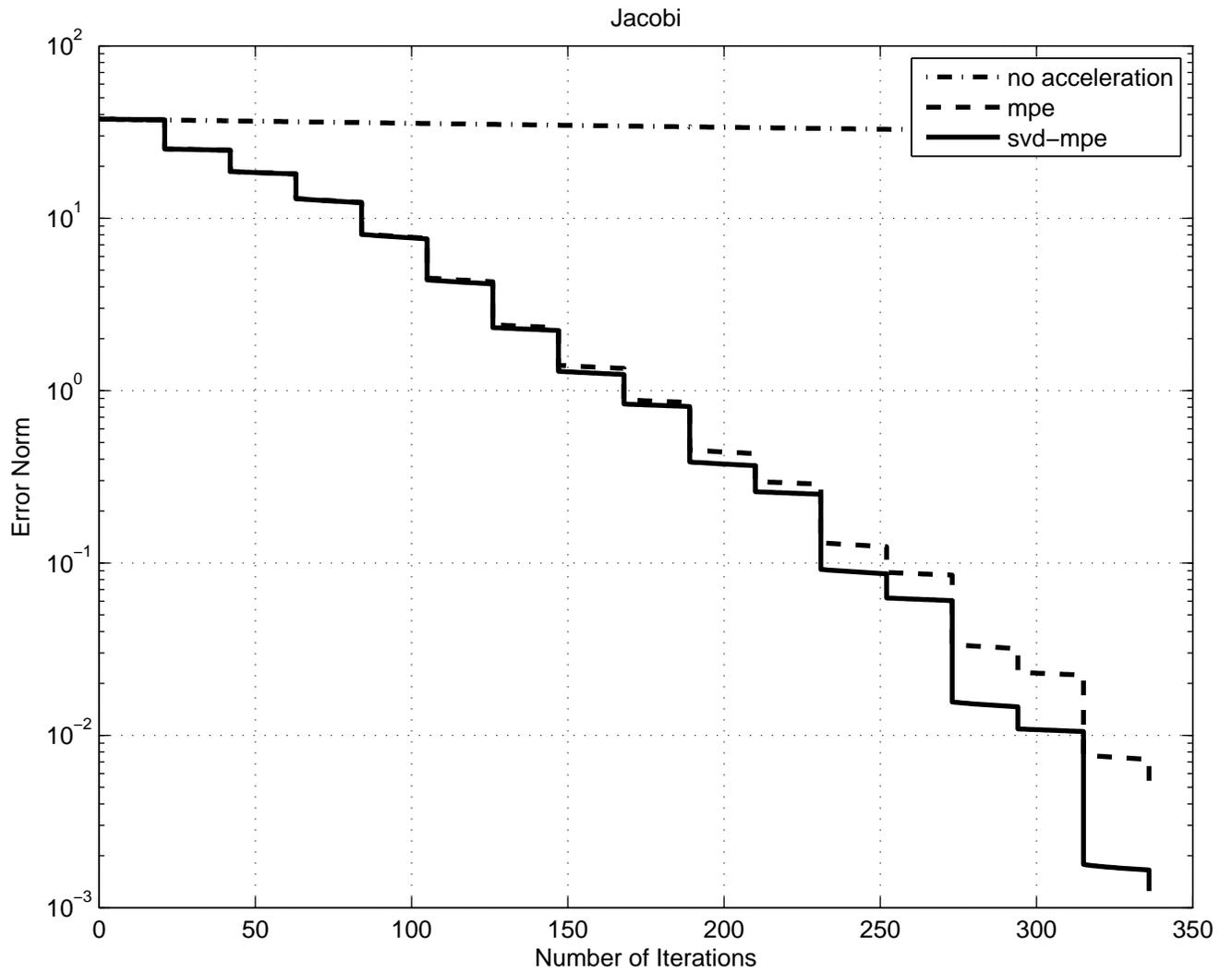}
\caption{\label{fig21} $l_2$ norm of error  in $\sss_{0,20}$ in the cycling mode,
from MPE and SVD-MPE,
for  Example \ref{ex2} with  $\nu=100$ hence $N=99^2$. The underlying  iteration method is that of Jacobi.}
\end{figure}

\newpage
\begin{figure}[htbp]
\includegraphics{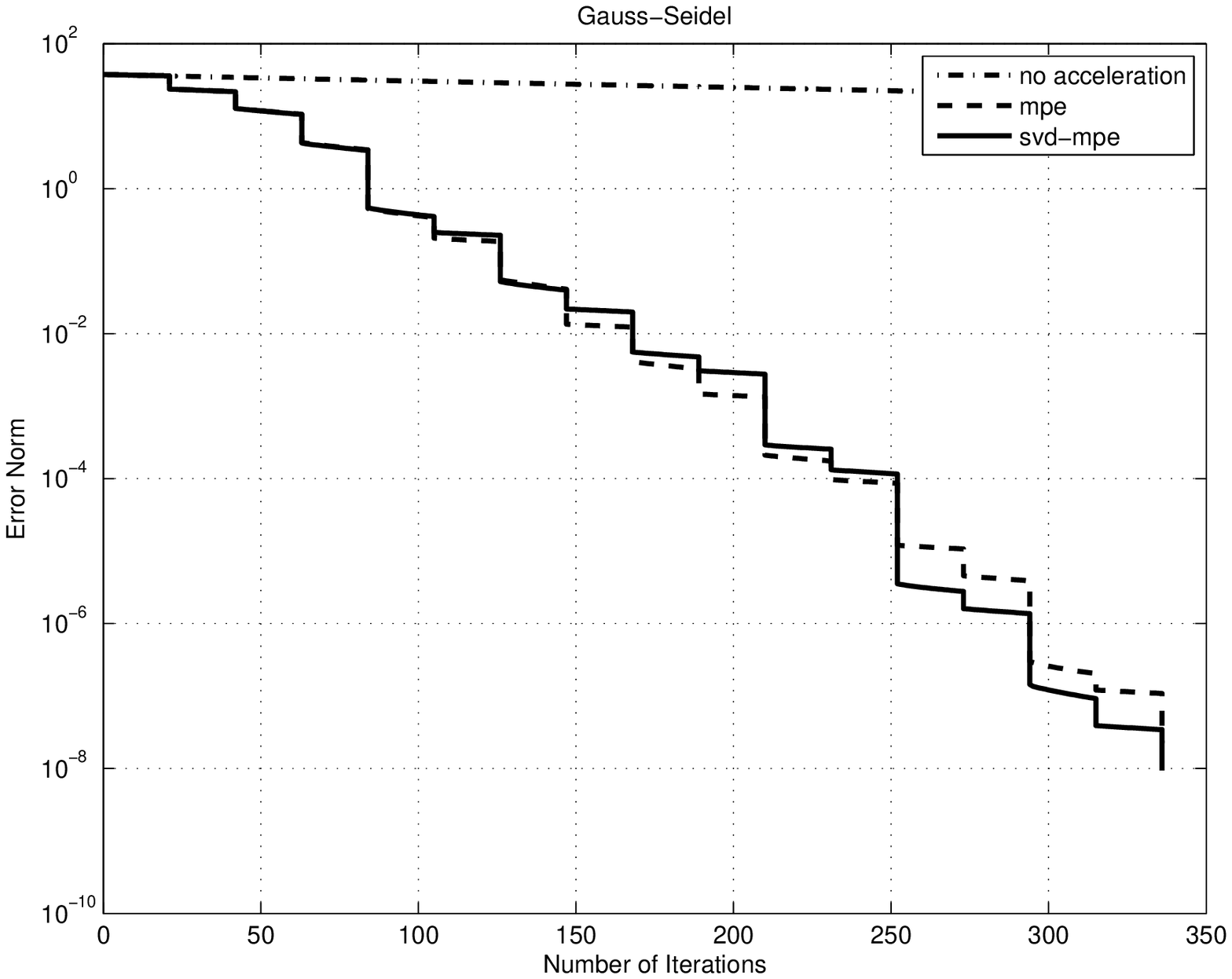}
\caption{\label{fig22} $l_2$ norm of error  in $\sss_{0,20}$ in the cycling mode,
from MPE and SVD-MPE,
for  Example \ref{ex2} with  $\nu=100$ hence $N=99^2$. The underlying  iteration method is that of Gauss--Seidel.}
\end{figure}

\end{document}